\documentclass[11pt,reqno]{amsart}

\setlength{\textwidth}{6.25in}
\setlength{\oddsidemargin}{0.125in}
\setlength{\evensidemargin}{0.125in}
\setlength{\textheight}{8.5in}
\setlength{\topmargin}{0pt}

\usepackage{amsmath,amsthm, underscore, amssymb}
\usepackage{enumerate}

\newtheorem{theorem}{Theorem}[section]
\theoremstyle{plain}
\newtheorem{corollary}[theorem]{Corollary}
\newtheorem{example}[theorem]{Example}
\newtheorem{lemma}[theorem]{Lemma}
\newtheorem{proposition}[theorem]{Proposition}
\newtheorem{remark}[theorem]{Remark}

\numberwithin{equation}{section}

\newcommand{\CC}{\mathbb{C}}
\newcommand{\NN}{\mathbb{N}}

\newcommand{\jbar}{\bar{j}}
\newcommand{\kbar}{\bar{k}}

\newcommand{\zbar}{\bar{z}}

\newcommand{\fbar}{\bar{f}}
\newcommand{\wbar}{\bar{w}}
\renewcommand{\d}{\partial}

\DeclareMathOperator{\trace}{trace}

\def\hbar{\bar{h}}

\def\Re{\mbox{\rm Re}\,}

\def\betabar{\bar{\beta}}
\def\gamabar{\bar{\gamma}}

\def\npbar{\overline{n+1}}

\begin{document}
\title[The sharp upper bounds for the first positive eigenvalue]{The sharp upper bounds for the first positive eigenvalue of the Kohn-Laplacian on compact strictly pseudoconvex hypersurfaces}
\author{Song-Ying Li}
\address{Department of Mathematics, University of California, Irvine, CA 92697-3875}
\email{sli@math.uci.edu}
\author{Guijuan Lin}
\address{College of Mathematics and Informatics, Fujian Normal University, Fuzhou 350108, Fujian, China}
\email{Guijuan_Lin@163.com}
\author{Duong Ngoc Son}
\address{Texas A\&M University at Qatar, Science Program, PO Box 23874, Education City, Doha, Qatar}
\email{son.duong@qatar.tamu.edu}
\thanks{2000 {\em Mathematics Subject Classification}. 32V20, 32W10}
\thanks{\emph{Key words and phrases:} eigenvalue, Kohn-Laplacian}
\thanks{The second author was partially supported by the Hu Guozan Study-Abroad Grant for graduates (China) for her visit to UC Irvine in 2015--2016 when part 
of this work was done. The third author was partially supported by the Qatar National Research Fund, NPRP project 7-511-1-098. Part of this work was done while the third author visited Fujian Normal University at Fuzhou, China in July 2016 which he thanks for supports and hospitality.}

\date{\today}

\begin{abstract}
We give sharp and explicit upper bounds for the first positive eigenvalue $\lambda_1(\Box_b)$ of the 
Kohn-Laplacian on compact strictly pseudoconvex hypersurfaces in $\mathbb{C}^{n+1}$ in terms of their
defining functions. As an application, we show that in the family of real ellipsoids, $\lambda_1(\Box_b)$ has a unique maximum value at the CR sphere.
\end{abstract}

\maketitle

\section{Introduction}
 Let $(M^{2n+1},\theta)$ be a compact strictly pseudoconvex
pseudohermitian manifold of real dimension $2n+1 \geq 3$. Let $\bar{\partial}_b \colon L^2(M) \to L^2_{0,1}(M)$ be
the tangential Cauchy--Riemann operator and $\bar{\partial}_{b}^{\ast}$
the formal adjoint with respect to the volume measure 
$dv = \theta\wedge (d\theta)^n$. The Kohn-Laplacian acting on functions
is given by \( \Box_b = \bar{\partial}_b^{\ast}\bar{\partial}_b\) and the sub-Laplacian is given by \(\Delta_b=2\Re \Box_b\). 
There has been growing interest in the relation between the spectra of the sub-Laplacian and the Kohn-Laplacian and the geometric qualities of the underlying CR manifolds. 
We mention here, for example, the Lichnerowicz-type estimate for the first positive eigenvalue of the sub-Laplacian on compact manifolds with a lower bound on Ricci and torsion was studied in, e.g., \cite{ADE,BD,Gr,LL1,Chiu}. The characterization of extremal case, the Obata-type problem, was studied in, e.g., \cite{CC,LW,IV}. In particular, X.~Wang and the first author proved an Obata-type theorem in CR geometry for compact manifolds in \cite{LW} which characterizes the CR sphere (among compact manifolds) as the only extremal case in the Lichnerowicz-type estimate for the sub-Laplacian. We refer the reader to the aforementioned papers and references therein for more 
details on these problems. 

The eigenvalue problem for $\Box_b$ is more involved. It is well-known that on a \emph{non-embeddable} compact strictly pseudoconvex manifold of three-dimension, \(\mathrm{Spec}\, (\Box_b)\) contains a sequence of ``small'' eigenvalues converging rapidly to zero. In this case, we can \emph{not} define the first positive eigenvalue of $\Box_b$. In fact, by the theorems of Boutet de Monvel, Burns, and Kohn, zero is an isolated eigenvalue of $\Box_b$ if and only if $M$ is embeddable in some complex space $\mathbb{C}^N$ \cite{Burns, BdM, K}; see also \cite{BE}. Thus, for embedded manifolds, it makes sense to define and study the first positive eigenvalue $\lambda_1$ of $\Box_b$.

In \cite{CCY}, Chanillo, Chiu, and Yang proved a Lichnerowicz-type lower bound for $\lambda_1$ for three-dimensional manifolds (which are not assumed to be embedded \emph{a priori}). Their method also gives the same estimate for five dimensional case. In a preprint \cite{CW}, Chang and Wu gave a lower bound in general dimension and proved some partial results on characterizing the equality case. In \cite{LSW}, X.~Wang, the first, and the third author completely analyzed the equality case by establishing an Obata-type theorem for the Kohn-Laplacian; we refer to \cite{LSW} for more details. 

In this paper, we shall give sharp \emph{upper} bounds for $\lambda_1$ on compact strictly pseudoconvex CR manifolds embedded in $\mathbb{C}^{n+1}$. Suppose $\rho$
is a smooth strictly plurisubharmonic function on $\mathbb{C}^{n+1}$ and $\nu$ is a regular value of $\rho$ such that $M:=\rho^{-1}(\nu)$ is compact. On $M$, consider the ``usual'' pseudohermitian structure $\theta$ ``induced'' by~$\rho$:
\begin{equation}\label{b}
 \theta 
 = 
 \iota^{\ast} (i/2)(\bar{\partial}\rho - \partial \rho), 
\end{equation}
where $\iota \colon M \to \mathbb{C}^{n+1}$ is the usual embedding. This pseudohermitian structure gives rise to a volume form 
$dv: = \theta\wedge (d\theta)^n$ on $M$. Furthermore, $\rho$ induces a K\"ahler metric $\rho_{j\kbar}dz^jd\zbar^{k}$ in a neighborhood $U$ of~$M$.
Let $[\rho^{j\bar{k}}]^t$ be the inverse of $[\rho_{j\kbar}]$. For a smooth function $u$ on $U$, the length of $\partial u$ in the K\"ahler metric is given by
\begin{equation}
|\partial u|^2_{\rho} = \rho^{j\bar{k}} u_j \bar{u}_{\bar{k}}.
\end{equation}
Here we use the usual summation convention: repeated Latin indices are summing from $1$ to $n+1$. We also use $\rho^{j\bar{k}}$ and $\rho_{j\bar{k}}$ to raise and lower the indices, e.g., $u^{\bar{k}} = \rho^{ l \bar{k}} u_{l}$, so that $|\partial u|_{\rho}^2 = \bar{u}_{\bar{k}} u^{\kbar}$.
We define the following degenerate differential operator
\begin{equation}
\tilde{\Delta}_{\rho} = \left(|\partial\rho|_\rho^{-2}\rho^{j}\rho^{\kbar} - \rho^{j\kbar}\right)\partial_{j}\partial_{\kbar}.
\end{equation}
Our first result in this paper is the following sharp upper bound for $\lambda_1$.
\begin{theorem}\label{thm:special}
Let $\rho$ be a smooth strictly plurisubharmonic function defined on an open set $U$ of $\mathbb{C}^{n+1}$, $M$ a compact connected regular level set of $\rho$, and $\lambda_1$ the first positive eigenvalue of $\Box_b$ on $M$. Assume that for some $j$,
\begin{align}\label{special}
\Re \rho_{\jbar}\tilde{\Delta}_{\rho}\, \rho_j + \tfrac{1}{n}\, |\d \rho|_\rho^2 \, |\tilde{\Delta}_{\rho} \rho_j|^2 \le 0\ \text{on}\ M.
\end{align}
Then
\begin{align}\label{specialbound}
\lambda_1(M,\theta) \leq n\max_M |\d \rho|_\rho^{-2}
\end{align}
and the equality holds only if $|\partial\rho|_\rho^{2}$ is constant along $M$.
\end{theorem}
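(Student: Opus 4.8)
The plan is to use the variational (Rayleigh quotient) characterization of the first positive eigenvalue,
\[
\lambda_1 = \inf\Big\{\tfrac{\int_M |\bar\partial_b u|^2\,dv}{\int_M|u|^2\,dv} : u \in L^2(M),\ u \perp \ker\Box_b,\ u\neq 0\Big\},
\]
so that producing a single admissible test function with small Rayleigh quotient yields \eqref{specialbound}. Motivated by the model $\rho = |z|^2$, where the gradient components $\rho_j = \zbar^j$ are genuine eigenfunctions of $\Box_b$ with eigenvalue $n$, I would take the restriction $\rho_j|_M$ for the index $j$ appearing in \eqref{special} and set $u = \rho_j - \Pi\rho_j$, where $\Pi$ is the orthogonal projection onto $\ker\Box_b$ (the boundary values of holomorphic functions). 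Since $\bar\partial_b\Pi\rho_j = 0$, this leaves the numerator unchanged, $\int_M|\bar\partial_b u|^2\,dv = \int_M|\bar\partial_b\rho_j|^2\,dv = \langle\Box_b\rho_j,\rho_j\rangle$, while $\|u\|^2 = \|\rho_j\|^2 - \|\Pi\rho_j\|^2$. Assuming $\rho_j|_M$ is not CR, so that $u \neq 0$, it then suffices to establish the integrated inequality $\langle\Box_b\rho_j,\rho_j\rangle \le n\,(\max_M|\d\rho|_\rho^{-2})\,\|u\|^2$.

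The core computation is a pointwise formula for $\Box_b$ acting on restrictions of ambient functions in terms of the induced K\"ahler data. I would decompose $\bar\partial\rho_j$ into its tangential part along $T^{0,1}M$ and its component along the complex normal $\rho^{\kbar}\d_{\kbar}$, using $|\d\rho|_\rho^{-2}\rho^j\rho^{\kbar}$ as the projector onto the normal direction and its complement $\rho^{j\kbar} - |\d\rho|_\rho^{-2}\rho^j\rho^{\kbar}$ as the projector onto the complex tangent space; this is exactly the symbol of $-\tilde\Delta_\rho$, which is precisely why the tangential Laplacian $\tilde\Delta_\rho\rho_j$ governs the result. Carrying this through, with the connection and torsion terms arising from the embedding, I expect a formula of the shape $\Box_b\rho_j = n\,|\d\rho|_\rho^{-2}\rho_j + (\text{first order}) + (\text{terms in }\tilde\Delta_\rho\rho_j)$ on $M$, the leading coefficient $n$ coming from the trace of the Levi form. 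Pairing with $\rho_{\jbar}$ and integrating by parts over $M$ should convert the first-order contributions into the quantity $\Re\rho_{\jbar}\tilde\Delta_\rho\rho_j$ that appears in \eqref{special}.

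With the energy identity in hand I would arrange $\langle\Box_b\rho_j,\rho_j\rangle$ into the form $\int_M\big(n|\d\rho|_\rho^{-2}|\rho_j|^2 + 2\Re\rho_{\jbar}\tilde\Delta_\rho\rho_j + \cdots\big)\,dv$ and absorb the cross terms by completing the square; the relevant algebraic identity is
\[
\Re\rho_{\jbar}\tilde\Delta_\rho\rho_j + \tfrac1n|\d\rho|_\rho^2\,|\tilde\Delta_\rho\rho_j|^2
= \tfrac1n|\d\rho|_\rho^2\big|\tilde\Delta_\rho\rho_j + \tfrac n2|\d\rho|_\rho^{-2}\rho_j\big|^2 - \tfrac n4|\d\rho|_\rho^{-2}|\rho_j|^2 ,
\]
so that hypothesis \eqref{special} forces the correction to carry the correct (nonpositive) sign. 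Combined with the pointwise bound $|\d\rho|_\rho^{-2}\le \max_M|\d\rho|_\rho^{-2}$ this should produce the desired $\langle\Box_b\rho_j,\rho_j\rangle \le n(\max_M|\d\rho|_\rho^{-2})\,\|u\|^2$ and hence \eqref{specialbound}.

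For the equality statement I would trace the inequalities backwards: equality in the Rayleigh bound forces equality in \eqref{special} pointwise and forces $|\d\rho|_\rho^{-2}$ to be everywhere equal to its maximum, i.e.\ $|\d\rho|_\rho^2$ constant on $M$. I expect the main obstacle to be twofold. First, deriving the precise pointwise formula for $\Box_b$ on restrictions, keeping careful track of the connection and torsion terms so that exactly the combination in \eqref{special} emerges with the sharp constant $n$. Second, and more delicate, is reconciling the normalization: the energy identity naturally controls things against $\|\rho_j\|^2$, whereas the admissible denominator is $\|u\|^2 = \|\rho_j\|^2 - \|\Pi\rho_j\|^2$, so I must ensure that the Szeg\H{o}-projection loss $\|\Pi\rho_j\|^2$ is supplied with the correct sign by the correction terms (equivalently, that the integration-by-parts boundary contributions produce precisely the holomorphic part), rather than degrading the bound. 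This global compatibility, as opposed to the pointwise algebra, is where I anticipate the real difficulty.
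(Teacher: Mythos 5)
Your choice of test function and your pointwise algebra are essentially the right ones: the completing-the-square identity you write is correct, and it is exactly equivalent to the inequality
\[
|\Box_b \rho_j|^2 \;\le\; n\,|\d\rho|_\rho^{-2}\,\Re\bigl(\rho_{\jbar}\,\Box_b\rho_j\bigr)
\quad\text{on } M,
\]
once one has the formula $\Box_b\rho_j = \tilde{\Delta}_{\rho}\rho_j + n|\d\rho|_\rho^{-2}\rho_j$. Moreover, you can stop worrying about connection and torsion corrections: the paper (Proposition~\ref{prop:kl}) shows that for \emph{any} ambient function $f$ one has, exactly, $\Box_b f = \tilde{\Delta}_{\rho}f + n|\d\rho|_\rho^{-2}\rho^{\kbar}f_{\kbar}$ on $M$; applied to $f=\rho_j$ the last term is $n|\d\rho|_\rho^{-2}\rho^{\kbar}\rho_{j\kbar}=n|\d\rho|_\rho^{-2}\rho_j$ with no remainder, and no integration by parts is needed anywhere --- the hypothesis \eqref{special} is used purely pointwise.

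The genuine gap is in the variational framework you chose, and you point at it yourself. With the Rayleigh quotient $\lambda_1 \le \int_M|\bar{\partial}_b u|^2 / \|u-S(u)\|^2$, you must bound the Szeg\H{o} loss $\|S(\rho_j)\|^2$, and the hypothesis \eqref{special} gives no information whatsoever about $S(\rho_j)$; the hoped-for mechanism (``boundary contributions produce precisely the holomorphic part'') does not exist, so as written the proof cannot be completed along that line. The missing idea --- and the way the paper proceeds --- is to use the \emph{dual} variational characterization $\lambda_1 \le \|\Box_b u\|^2 / \int_M|\bar{\partial}_b u|^2$ (Corollary~\ref{cor:32}), in which the Szeg\H{o} projection never appears: integrating the displayed pointwise inequality (note it forces $\Re(\rho_{\jbar}\Box_b\rho_j)\ge 0$ pointwise) gives $\|\Box_b\rho_j\|^2 \le n\max_M\bigl(|\d\rho|_\rho^{-2}\bigr)\int_M|\bar{\partial}_b\rho_j|^2$, and the bound \eqref{specialbound} follows at once. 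Equivalently, you can rescue your own setup with one line of Cauchy--Schwarz: since $\Box_b\rho_j \perp \ker\Box_b$,
\[
\int_M |\bar{\partial}_b\rho_j|^2 = \bigl\langle \Box_b\rho_j,\ \rho_j - S(\rho_j)\bigr\rangle \le \|\Box_b\rho_j\|\,\|\rho_j - S(\rho_j)\|,
\]
which combined with the previous display yields exactly the denominator bound $\int_M|\bar{\partial}_b\rho_j|^2 \le n\max_M\bigl(|\d\rho|_\rho^{-2}\bigr)\|\rho_j-S(\rho_j)\|^2$ that you were missing. Two smaller points: the equality assertion needs more than ``trace the inequalities backwards'' --- the pointwise inequality $(n|\d\rho|_\rho^{-2}-\lambda_1)\Re(\rho_{\jbar}\Box_b\rho_j)\le 0$ only gives constancy of $|\d\rho|_\rho^{-2}$ where $\Re(\rho_{\jbar}\Box_b\rho_j)>0$, which is why the paper runs a mean-value-theorem argument (Proposition~\ref{prop:Bz}); and both your argument and the paper's implicitly require $\rho_j|_M$ to be non-CR, which you were right to flag as a hypothesis.
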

The upper bound in~\eqref{specialbound} is sharp and the equality occurs on the sphere with the standard pseudohermitian structure. Moreover, in Example~\ref{cex} below, we shall see that the condition \eqref{special} can \emph{not} be relaxed. 

Notice that condition~\eqref{special} is satisfied if there exists $j$ such that $\rho_{j\kbar l} = 0$ for all $k$ and $l$ and hence we can easily construct examples
for which Theorem~\ref{thm:special} does apply. In particular, if $\rho_{j\kbar}=\delta_{jk}$, then \eqref{special} holds. We shall show that in this case, we can improve the estimate by taking the average value of $|\partial\rho|^{-2}_{\rho}$ instead of its maximum. Thus, we define $v(M) = \int_{M}\theta \wedge (d\theta)^n$ be the volume of~$M$.
\begin{theorem}\label{thm:flat} 
Let $\rho$ be a smooth strictly plurisubharmonic function defined on an open set $U$ of $\mathbb{C}^{n+1}$, $M$ a compact connected regular level set of $\rho$, and $\lambda_1$ the first positive eigenvalue of $\Box_b$ on $M$. Suppose that $\rho_{j\kbar} = \delta_{jk}$, then
\begin{equation}\label{e:average}
\lambda_1 \leq \frac{n}{v(M)} \int_{M} |\partial\rho|^{-2}_{\rho} \theta\wedge (d\theta)^n.
\end{equation}
The equality occurs only if $|\partial\rho|_{\rho}^{2}$ is constant on $M$. If furthermore, $\rho$ is defined in the domain bounded by $M$, then $M$ must be a sphere.
\end{theorem}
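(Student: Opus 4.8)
The plan is to bound $\lambda_1$ from above by the Rayleigh quotient for $\Box_b=\bar\partial_b^{\ast}\bar\partial_b$, using the complex coordinate functions as test functions. Recall that $\lambda_1=\min\{\|\bar\partial_b u\|^2/\|u\|^2 : u\in L^2(M),\ u\perp\ker\Box_b\}$, where $\ker\Box_b$ is the space of CR functions on $M$, and that for an arbitrary $u$ one has $\lambda_1\,\|u-Pu\|^2\le\|\bar\partial_b u\|^2$, where $P$ is the orthogonal projection onto $\ker\Box_b$ and $\bar\partial_b(u-Pu)=\bar\partial_b u$. Since $\rho_{j\kbar}=\delta_{jk}$, the functions $\rho_j$ satisfy $\partial_{\kbar}\rho_j=\delta_{jk}$, so $\rho_j=\bar z_j+h_j(z)$ with $h_j$ holomorphic; in particular $h_j\in\ker\Box_b$ and $\bar\partial_b\rho_j=\bar\partial_b\bar z_j$. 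I would take the $n+1$ test functions $u_j:=\rho_j$, $j=1,\dots,n+1$, and apply the ``sum of ratios'' inequality $\lambda_1\le \big(\sum_j\|\bar\partial_b\rho_j\|^2\big)\big/\big(\sum_j\|\rho_j-P\rho_j\|^2\big)$.

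The computational core is a pointwise identity for the numerator. Decomposing the ambient $(0,1)$-form $\bar\partial\rho_j=d\bar z^j$ into its tangential part and its component along $\bar\partial\rho$, and measuring lengths in the Levi metric induced by $\theta$, one gets $|\bar\partial_b\rho_j|_\theta^2=\rho^{j\bar j}-|\partial\rho|_\rho^{-2}\,|\rho_j|^2$ (no sum). Summing over $j$ and using $\rho^{j\bar j}=1$ together with $\sum_j|\rho_j|^2=\rho^{j\kbar}\rho_j\rho_{\kbar}=|\partial\rho|_\rho^2$, the normal contributions collapse and $\sum_j|\bar\partial_b\rho_j|_\theta^2=(n+1)-1=n$ identically on $M$. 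Hence the numerator equals $n\,v(M)$. This is exactly where the hypothesis $\rho_{j\kbar}=\delta_{jk}$ is essential, and it reproduces the known value $\lambda_1=n$ on the sphere, where $\rho_j=\bar z_j$ are the first eigenfunctions.

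For the denominator I would use $\sum_j\|\rho_j-P\rho_j\|^2\le\sum_j\|\rho_j\|^2=\int_M|\partial\rho|_\rho^2\,dv$, and then invoke Cauchy--Schwarz in the form $v(M)^2=\big(\int_M|\partial\rho|_\rho^{-1}\,|\partial\rho|_\rho\,dv\big)^2\le\int_M|\partial\rho|_\rho^{-2}\,dv\cdot\int_M|\partial\rho|_\rho^2\,dv$ to convert the intermediate bound $\lambda_1\le n\,v(M)/\!\int_M|\partial\rho|_\rho^2\,dv$ into the stated average bound \eqref{e:average}. The \emph{main obstacle} is the control of the projection $P\rho_j$: one must verify that subtracting $P\rho_j$ does not push the denominator below the Cauchy--Schwarz threshold $v(M)^2/\!\int_M|\partial\rho|_\rho^{-2}\,dv$. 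Because $h_j\in\ker\Box_b$ gives $\rho_j-P\rho_j=\bar z_j-P\bar z_j$, this reduces to estimating $\sum_j\|P\bar z_j\|^2$; I expect to handle it via the divergence (Stokes) theorem on $M$, exploiting that the ambient K\"ahler form is flat, $i\partial\bar\partial\rho=i\sum_k dz^k\wedge d\bar z^k$, so that the pairings $\int_M\bar z_j\bar f\,dv$ against CR functions $f$ integrate out, leaving only a mean term that is removed harmlessly.

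Finally, for the rigidity I would trace the equalities. Equality in \eqref{e:average} forces equality in Cauchy--Schwarz, hence $|\partial\rho|_\rho^2$ is constant on $M$, and it forces each $\rho_j-P\rho_j$ to be a genuine $\lambda_1$-eigenfunction. When $\rho$ is defined on the bounded domain $\Omega$ with $\partial\Omega=M$, the condition $\rho_{j\kbar}=\delta_{jk}$ gives $\rho=|z|^2+2\,\Re h+\mathrm{const}$ with $h$ holomorphic, so that $|\partial\rho|_\rho^2=|z|^2+2\,\Re\big(\sum_j z_jh_j-h\big)+\sum_j|h_j|^2$. Since both $\rho-\nu$ and $|\partial\rho|_\rho^2-\mathrm{const}$ vanish on $M$, they are proportional near $M$; comparing their Levi forms along $T^{1,0}M$ (the first being $\delta_{jk}$, the second $\sum_l h_{lj}\overline{h_{lk}}$) should force the holomorphic Hessian $h_{jk}$ to vanish, so that $h$ is affine and $M$ is a sphere. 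I expect the cleanest route to this last step to be the Obata-type characterization of the equality case in \cite{LSW}, which identifies the extremal $M$ with the standard CR sphere.
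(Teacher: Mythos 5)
Your choice of test functions and your numerator identity are exactly the paper's: with $\rho_{j\bar k}=\delta_{jk}$ one has $|\bar{\partial}_b\bar{z}^j|^2=1-|\rho_j|^2/|\partial\rho|_\rho^2$ pointwise, so $\sum_j\int_M|\bar{\partial}_b\bar{z}^j|^2\,dv=n\,v(M)$ (the paper's $\sum_jD_j$). The genuine gap is the denominator. Since $\lambda_1$ is bounded above by the Rayleigh quotient, what you need is a \emph{lower} bound $\sum_j\|\rho_j-P\rho_j\|^2\ge v(M)^2\big/\int_M|\partial\rho|_\rho^{-2}$, but the inequality you display, $\sum_j\|\rho_j-P\rho_j\|^2\le\int_M|\partial\rho|_\rho^2$, goes the wrong way: combined with your Cauchy--Schwarz step it merely compares the two candidate upper bounds with each other and yields no bound on $\lambda_1$. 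You correctly flag this as the main obstacle, but the mechanism you propose to close it --- that the pairings $\int_M\bar{z}_j\bar{f}\,dv$ against CR functions $f$ ``integrate out'' by Stokes, so that $P\bar{z}_j$ is essentially harmless --- is a special feature of the sphere, not of general level sets with $\rho_{j\bar k}=\delta_{jk}$. For instance, for $\rho=\|z\|^2+2\epsilon\,\Re z_1^3$ the CR function $f=z_1^2$ pairs with $\bar{z}_1$ to give $\int_M\bar{z}_1^3\,dv$, which vanishes at $\epsilon=0$ by symmetry but has no reason to vanish (and generically does not) once $\epsilon\ne0$; hence $P\bar{z}_1\ne0$ and nothing in your argument controls its size. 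The only way to obtain the needed lower bound is to test $\bar{z}^j-P\bar{z}^j$ against a \emph{computable} element of $(\ker\Box_b)^\perp$, namely $\Box_b\bar{z}^j=n|\partial\rho|_\rho^{-2}\rho^{\bar j}$ from Proposition~\ref{prop:kl}: Cauchy--Schwarz gives $\|\bar{z}^j-P\bar{z}^j\|^2\ge\bigl(\int_M|\bar{\partial}_b\bar{z}^j|^2\bigr)^2\big/\|\Box_b\bar{z}^j\|^2=D_j^2/(n^2C_j)$ in the paper's notation, and summing over $j$ (with the discrete Cauchy--Schwarz inequality) gives $\lambda_1\le n^2\sum_jC_j\big/\sum_jD_j$. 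But this \emph{is} the paper's proof: it is precisely the characterization $\lambda_1\le\|\Box_bu\|^2\big/\int_M|\bar{\partial}_bu|^2$ of Corollary~\ref{cor:32}, applied to $u=\bar{z}^j$ in Theorem~\ref{thm:upperbound}, whose entire purpose is to eliminate the Szeg\H{o} projection that your route cannot control. So what is missing is not a technical verification; it is the paper's key device.

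The equality and rigidity parts inherit this gap and have difficulties of their own. Your claim that equality forces equality in your Cauchy--Schwarz, hence constancy of $|\partial\rho|_\rho^2$, runs through the invalid chain; the paper instead shows that equality forces $b_j=|\partial\rho|_\rho^{-2}\rho_j$ to lie in the first eigenspace, so that $\bar{z}^j-(n/\lambda_1)|\partial\rho|_\rho^{-2}\rho_j$ is CR, and then applies the tangential fields $X_{\bar l\bar k}$ at a maximum point of $|\partial\rho|_\rho^2$ to conclude constancy. For the final step, your Levi-form comparison only yields that $\bigl[\sum_lh_{lj}\overline{h_{lk}}\bigr]$ restricted to $T^{1,0}M$ equals an unknown function multiple of the identity, which does not force $h_{jk}=0$; moreover, any argument local near $M$ is suspect here, because the sphere conclusion genuinely uses the hypothesis that $\rho$ is defined in the whole domain $D$ bounded by $M$. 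The paper uses that hypothesis through Serrin's overdetermined-problem theorem (Lemma~\ref{lem:c}): $u=\rho-\nu$ has constant Laplacian in $D$, vanishes on $\partial D$, and has constant normal derivative there, so $D$ is a ball. Finally, the Obata-type theorem of \cite{LSW} characterizes equality in the Lichnerowicz-type \emph{lower} bound, so invoking it for the equality case of this \emph{upper} bound would require substantial additional justification.
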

The estimate \eqref{e:average} is a special case of a more general estimate in Theorem~\ref{thm:upperbound} below which provides a sharp upper bound for $\lambda_1$ in terms of the eigenvalues of the 
complex Hessian matrix~$[\rho_{j\kbar}]$.

Our main motivation for proving the upper bound in Theorem~\ref{thm:flat} comes from its application to the
eigenvalue problems on the real \emph{ellipsoids}, the compact regular level sets of a real plurisubharmonic quadratic polynomial. The ellipsoids was studied by Webster \cite{We} who showed that an ellipsoid is not biholomorphic equivalent to the sphere unless it is complex linearly equivalent to the sphere. (It is now well-known that two generic ellipsoids are not biholomorphic equivalent). The eigenvalue problem on ellipsoids was also studied by Tran and the first author \cite{LiTran}. This paper provides an upper bound for the first positive eigenvalue of $\Delta_b$ on the real ellipsoids in $\mathbb{C}^2$. We shall show that on real ellipsoids, the upper bound in Theorem~\ref{thm:flat} can be computed explicitly.
\begin{corollary}\label{cor:ellipsoid}
Let $\rho(Z)$ be a real-valued strictly plurisubharmonic homogeneous quadratic polynomial satisfying $\rho_{j\kbar} = \delta_{jk}$.
Suppose that $M = \rho^{-1}(\nu)$ ($\nu>0$) is a compact connected regular level set of $\rho$. 
Then 
\begin{align}\label{e:ellipsoid}
\lambda_1(M,\theta) \leq \lambda_1(\sqrt{\nu}\,\mathbb{S}^{2n+1},\theta_0) = n/\nu.
\end{align}
The equality occurs if and only if \((M,\theta) = (\sqrt{\nu}\,\mathbb{S}^{2n+1},\theta_0)\).
\end{corollary}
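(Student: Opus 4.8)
The plan is to apply Theorem~\ref{thm:flat} and then evaluate the resulting averaged bound \emph{exactly}. First I would record the shape of $\rho$: a real-valued homogeneous quadratic with $\rho_{j\kbar}=\delta_{jk}$ must have the form $\rho(Z)=|Z|^2+\Re(Z^{t}BZ)$ for a symmetric matrix $B=[b_{jk}]$, so that $\rho_j=\zbar^{\,j}+(BZ)_j$ and $|\partial\rho|_\rho^2=\sum_j|\rho_j|^2$. Since $\rho$ is a polynomial it is defined on all of $\mathbb{C}^{n+1}$, in particular on the domain $\Omega=\{\rho<\nu\}$ bounded by $M$, so the hypotheses of Theorem~\ref{thm:flat} hold and we obtain $\lambda_1(M,\theta)\le \tfrac{n}{v(M)}\int_M|\partial\rho|_\rho^{-2}\,\theta\wedge(d\theta)^n$.

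The heart of the argument is to show that this averaged upper bound is identically $n/\nu$, i.e. that $\int_M|\partial\rho|_\rho^{-2}\,dv=v(M)/\nu$ for \emph{every} such $\rho$. I would pass from the pseudohermitian volume to Euclidean quantities. Two facts are needed: because $\rho^{j\kbar}=\delta_{jk}$, the Euclidean gradient satisfies $|\nabla\rho|^2=4|\partial\rho|_\rho^2$; and the ambient identity $d\rho\wedge\theta\wedge(d\theta)^n=c_n|\partial\rho|_\rho^2\,dV$ (with $c_n$ a universal constant and $dV$ Lebesgue measure) combined with the coarea relation $dV=|\nabla\rho|^{-1}\,d\rho\wedge d\sigma$ yields $dv=\tfrac{c_n}{2}|\partial\rho|_\rho\,d\sigma$ on $M$, where $d\sigma$ is Euclidean surface measure. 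Substituting, $v(M)=\tfrac{c_n}{4}\int_M|\nabla\rho|\,d\sigma$ and $\int_M|\partial\rho|_\rho^{-2}\,dv=c_n\int_M|\nabla\rho|^{-1}\,d\sigma$.

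These two surface integrals I would evaluate by the divergence theorem on $\Omega$. Applying it to the radial field $X=Z$ and using Euler's relation $X\cdot\nabla\rho=2\rho=2\nu$ on $M$ gives $\int_M|\nabla\rho|^{-1}\,d\sigma=\tfrac{n+1}{\nu}\,|\Omega|$; applying it to $\nabla\rho$ and using $\Delta\rho=4\sum_j\rho_{j\jbar}=4(n+1)$ gives $\int_M|\nabla\rho|\,d\sigma=4(n+1)|\Omega|$, where $|\Omega|$ denotes the Lebesgue volume of $\Omega$. Dividing, the constant $c_n$ and the factor $(n+1)|\Omega|$ cancel and we obtain exactly $\int_M|\partial\rho|_\rho^{-2}\,dv=v(M)/\nu$; hence Theorem~\ref{thm:flat} reads $\lambda_1\le n/\nu$, which is \eqref{e:ellipsoid} once we recall the known value $\lambda_1(\sqrt{\nu}\,\mathbb{S}^{2n+1},\theta_0)=n/\nu$.

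For the equality statement, the key observation is that since the averaged bound equals $n/\nu$ identically along the whole family, $\lambda_1(M,\theta)=n/\nu$ is \emph{precisely} equality in the inequality of Theorem~\ref{thm:flat}. Its equality clause then forces $|\partial\rho|_\rho^2$ to be constant on $M$, and because $\rho$ is defined on the whole domain bounded by $M$, the same theorem forces $M$ to be a sphere; by homogeneity this means $B=0$ and $\rho=|Z|^2$, so $(M,\theta)=(\sqrt{\nu}\,\mathbb{S}^{2n+1},\theta_0)$, while the converse is the known sphere value. The main obstacle is the exact evaluation in the middle two steps: pinning down the volume-form identity $dv=\tfrac{c_n}{2}|\partial\rho|_\rho\,d\sigma$ and verifying that the two divergence-theorem computations cancel precisely. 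The crucial and slightly surprising point is that the averaged bound is constant over the entire family of ellipsoids, so that the strict inequality for non-spheres is not supplied by the averaging step at all but comes entirely from the Obata-type rigidity built into Theorem~\ref{thm:flat}.
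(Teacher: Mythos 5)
Your proposal is correct, and its scaffolding (apply Theorem~\ref{thm:flat}, show the averaged bound equals $n/\nu$ exactly, then use the rigidity clause for equality) matches the paper's; however, your evaluation of the average $\frac{1}{v(M)}\int_M|\partial\rho|_\rho^{-2}\,dv$ is a genuinely different argument from the paper's key lemma (Proposition~\ref{prop:a}). The paper stays inside the CR framework: Euler's relation gives $\Re\sum_j z^j\rho_j=\nu$ pointwise on $M$, and then, writing $|\partial\rho|_\rho^{-2}\rho_j=\frac{1}{n}\Box_b\zbar^j$ (Proposition~\ref{prop:kl}) and using self-adjointness of $\Box_b$ together with the fact that each $Q_j$ is holomorphic, hence CR, one may replace $z^j$ by $\overline{\rho_j}=z^j+\overline{Q_j}$ at no cost, after which the integrand collapses to $1$ and the identity $\int_M|\partial\rho|_\rho^{-2}\,dv=v(M)/\nu$ drops out. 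You instead convert $dv$ to Euclidean surface measure via $d\rho\wedge\theta\wedge(d\theta)^n=c_n|\partial\rho|_\rho^2\,dV$ and coarea, then compute the two resulting surface integrals by the divergence theorem (position field plus Euler's relation, and gradient field plus $\Delta\rho=4(n+1)$), both coming out proportional to the Euclidean volume $|\Omega|$ so that the ratio is exactly $1/\nu$. Both computations hinge on homogeneity; yours is more elementary (classical vector calculus, no use of $\Box_b$ beyond Theorem~\ref{thm:flat} itself) and has the pleasant byproduct of expressing both integrals in terms of $|\Omega|$, while the paper's is shorter given the Section~2 machinery and needs no measure conversion. Your treatment of equality coincides with the paper's primary route (through Lemma~\ref{lem:c}, i.e.\ Serrin's theorem).

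Two small points you should tighten. First, the identities $|\nabla\rho|^2=4|\partial\rho|_\rho^2$ and $dv=\tfrac{c_n}{2}|\partial\rho|_\rho\,d\sigma$ use $\rho_{j\kbar}=\delta_{jk}$; this holds here, but say so. Second, the step ``by homogeneity this means $B=0$'' deserves a line: Serrin's theorem yields a round sphere, which is centered at the origin because $\rho(-Z)=\rho(Z)$; then $\Re(Z^tBZ)$ is a harmonic homogeneous quadratic that is constant on a centered sphere, hence vanishes identically (by homogeneity it would otherwise be a nonzero multiple of $\|Z\|^2$, which is not harmonic), and a holomorphic quadratic with vanishing real part is zero, so $B=0$. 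Note that the paper also records a Serrin-free alternative for this last step --- diagonalizing $Q$ by the symmetric (Takagi) factorization and restricting to the coordinate axes --- which you could substitute if you wish to avoid Lemma~\ref{lem:c} entirely.
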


Here, \(\sqrt{\nu}\,\mathbb{S}^{2n+1}\) is the sphere $\|Z\|^2 =\nu$ and \(\theta_0 = \iota^{\ast} (i\bar{\partial} \|Z\|^2)\) is the ``standard'' pseudohermitian structure on the sphere.

The paper is organized as follows. In Section 2, we shall give two simple formulas for the Kohn-Laplacian on compact real hypersurfaces in complex 
manifolds. These formulas allow us to compute $\Box_b$ explicitly in terms of the defining function~$\rho$; see Proposition~\ref{prop:kl}. These formulas will
be crucial for the latter sections. In Section 3, we shall prove a general estimate for $\lambda_1(\Box_b)$ and Theorem~\ref{thm:special}. In Section 4, we shall give a sharp upper bound for $\lambda_1$ in terms of the eigenvalues of the complex Hessian~$[\rho_{j\kbar}]$, implying the estimate in Theorem~\ref{thm:flat}, and prove the characterization of equality case. We also give a family of examples (beside the ellipsoids) where we can apply this bound. These examples also show that the condition \eqref{special} in Theorem~\ref{thm:special} can not be relaxed. In Section 5, we shall compute the bound in Theorem~\ref{thm:flat} explicitly in the case of ellipsoids, proving Corollary~\ref{cor:ellipsoid}.

\section{The Kohn-Laplacian on compact real hypersurfaces}
\def\npbar{\overline{n+1}}
In this section, we shall give two formulas for $\Box_b$ on a compact regular level set of a 
K\"ahler potential $\rho$ in terms of $\partial \rho$ and the metric $\rho_{j\bar{k}}dz^jdz^{\bar{k}}$. First, let us start with a compact real hypersurface in $\mathbb{C}^{n+1}$ arising as a regular level set of a strictly plurisubharmonic function $\rho$:
\begin{equation}
M = \rho^{-1}(\nu):=\{Z \in U \colon \rho(Z) = \nu\}.
\end{equation}
Here $\rho$ is smooth on a neighborhood $U$ of $M$ and $d\rho\ne 0$ along~$M$. We assume that the complex Hessian $H(\rho): = [\rho_{j\kbar}]$ is positive definite and thus $\rho$ defines a K\"ahler metric $\rho_{j\bar{k}}dz^jd\bar{z}^{k}$ on $U$. Let $[\rho^{j\bar{k}}]^t$ be the inverse of $H(\rho)$. For a smooth function $u$ on $U$, the length of $\partial u$ in the K\"ahler metric is then given by
\begin{equation}
|\partial u|^2_{\rho} = \rho^{j\bar{k}} u_j \bar{u}_{\bar{k}}.
\end{equation}
We shall always equip $M$ with the pseudohermitian structure $\theta$ ``induced'' by $\rho$:
\begin{equation}
\theta = \iota^{\ast} (i/2)(\bar{\partial} \rho - \partial \rho).
\end{equation}
For local computations, it is convenient to work in the local admissible holomorphic coframe $\{\theta^{\alpha} \colon \alpha = 1,2,\dots, n\}$ on $M$ given by
\begin{equation}\label{e:2.4}
\theta^{\alpha} = dz^{\alpha} - i h^{\alpha} \theta, 
\quad
h^{\alpha} = |\partial \rho|_{\rho}^{-2}\rho^{\alpha} 
=
|\partial \rho|_{\rho}^{-2}\rho_{\jbar}\rho^{\alpha\jbar},
\quad 
\alpha = 1,2\dots n.
\end{equation}
This admissible coframe is valid when $\rho_{n+1} \ne 0$. It is shown by Luk and the first author \cite[p. 679]{LL} that at the point $p$ with $\rho_{n+1} \ne 0$,
\begin{equation}
d\theta = ih_{\alpha\bar{\beta}} \theta^{\alpha}\wedge \theta^{\bar{\beta}},
\end{equation}
where the Levi matrix $[h_{\alpha\bar{\beta}}]$ is given explicitly:
\begin{equation}
h_{\alpha\betabar}=\rho_{\alpha \betabar}-\rho_\alpha \d_{\betabar}\log \rho_{n+1}-\rho_{\betabar}\d_{\alpha}\log \rho_{\overline{n+1}}+\rho_{n+1\overline{n+1}}
\frac{\rho_\alpha \rho_{\betabar}}{|\rho_{n+1}|^2}.
\end{equation}
We can check directly that the inverse $[h^{\gamma\bar{\beta}}]$ of the Levi matrix is given by
\begin{equation}
h^{\gamma \betabar}
=
\rho^{\gamma\betabar}- \frac{\rho^\gamma \rho^{\betabar}}{|\d \rho|^2_\rho},
\quad
\rho^{\gamma} = \sum_{k=1}^{n+1}\rho_{\kbar} \rho^{\gamma\kbar}.
\end{equation}
We use the Levi matrix and its inverse to lower and raise the Greek indices; repeated Greek indices are summing from $1$ to $n$. The Tanaka-Webster 
covariant derivatives are given by
\begin{equation}
\nabla_{\alpha}\nabla_{\bar{\beta}}f 
=
Z_{\alpha} Z_{\betabar} f - \omega_{\bar{\beta}}{}^{\bar{\sigma}}(Z_{\alpha}) Z_{\bar{\sigma}} f
\end{equation}
where $\{Z_{\alpha}\}$ is the holomorphic frame dual to $\{\theta^{\alpha}\}$ and $\omega_{\bar{\beta}}{}^{\bar{\sigma}}$ are the connection forms. More precisely,
\begin{equation}
Z_{\alpha}
=
\frac{\d}{\d z^{\alpha}} - \frac{\rho_{\alpha}}{\rho_{n+1}} \frac{\d}{\d z_{n+1}},
\end{equation}
and the Tanaka-Webster connection forms are computed in \cite{LL}; see also \cite{We}.
\begin{equation}
\omega_{\betabar\alpha}
=
(Z_{\bar{\gamma}} h_{\alpha\betabar} - h_{\betabar}h_{\alpha\gamabar}) \theta^{\gamabar} + h_{\alpha}h_{\gamma\betabar}\theta^{\gamma} + ih_{\alpha\bar{\sigma}}Z_{\betabar} h^{\bar{\sigma}} \theta,\quad h_\alpha =h_{\alpha \betabar} h^{\betabar}.
\end{equation}
Also, the Reeb vector field is given by
\begin{equation}
 T=i \sum_{j=1}^{n+1}\left(h^j \frac{\d }{ \d z^j}-h^{\jbar} \frac{\d }{ \d \zbar^j}\right),\quad h^j= \frac{\rho^j }{|\d \rho|_\rho^2}.
 \end{equation}
The formula \eqref{e:kohnformula} below, expressing $\Box_b$ in terms of $\rho$, will be crucial for our analysis.
\begin{proposition}\label{prop:kl}
Let $U$ be an open set in a K\"ahler manifold $X$ and $\rho$ a K\"ahler potential on $U$. Let $M$ be a smooth, compact, connected, regular level set of $\rho$,
\(\theta=\frac{i}{2}(\bar{\partial} \rho - \partial \rho)\), and $\Box_b$ the Kohn-Laplacian defined on $M$ with respect to $dv=\theta\wedge (d\theta)^n$.
\begin{enumerate}[(i)]
 \item If $f$ is a smooth function on $U$, then the following identity holds on $M$.
\begin{equation}\label{e:kohnformula}
\Box_b f
=
- \trace (i\partial \bar{\partial} f)
+ |\partial \rho|_\rho^{-2} \langle \partial \bar{\partial} f, \partial \rho \wedge \bar{\partial} \rho \rangle
+ n|\partial \rho|_\rho^{-2}\langle \partial \rho, \bar{\partial} f \rangle,
\end{equation}
 \item Suppose that $(z^1,z^2,\dots, z^{n+1})$ is a local coordinate system on an open set $V$. Define the vector fields
 \begin{equation}
X_{jk} = \rho_{k}\partial_j - \rho_{j}\partial_k, 
\quad
X_{\bar{j}\bar{k}} = \overline{X_{jk}}.
\end{equation}
Then the following holds on $M \cap V$.
\begin{equation}\label{e:gengel}
 \Box_b f
 =
 -\frac{1}{2}|\partial \rho|_\rho^{-2} \rho^{p\bar{k}} \rho^{q\bar{j}} X_{pq}X_{\bar{j}\bar{k}} f.
\end{equation}
\end{enumerate}
\end{proposition}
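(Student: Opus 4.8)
My plan is to derive both identities from the intrinsic description of the Kohn--Laplacian on functions, namely $\Box_b f = -h^{\alpha\bar\beta}\nabla_\alpha\nabla_{\bar\beta}f$, which follows from $\bar\partial_b f = (Z_{\bar\beta}f)\theta^{\bar\beta}$ together with the divergence formula for $\bar\partial_b^\ast$ with respect to $dv$. Every ingredient on the right-hand side is already recorded above: the inverse Levi matrix $h^{\alpha\bar\beta} = \rho^{\alpha\bar\beta} - \rho^\alpha\rho^{\bar\beta}/|\partial\rho|_\rho^2$, the frame $Z_\alpha = \partial_\alpha - (\rho_\alpha/\rho_{n+1})\partial_{n+1}$, and the connection form $\omega_{\bar\beta}{}^{\bar\sigma}$. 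The conceptual content of \eqref{e:kohnformula} is that $-\trace(i\partial\bar\partial f) = -\rho^{j\bar k}f_{j\bar k}$ is the full ambient complex Laplacian, the second term adds back precisely the complex normal--normal component $|\partial\rho|_\rho^{-2}\rho^j\rho^{\bar k}f_{j\bar k}$ of the Hessian so that only the trace of $i\partial\bar\partial f$ over the $n$ CR directions survives, and the last term is the first-order contribution of the second fundamental form of $M$ in the K\"ahler metric, whose trace over the $n$ CR directions produces the factor $n$.

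To carry this out for part (i) I would substitute $h^{\alpha\bar\beta}$ and $Z_\alpha$ into $-h^{\alpha\bar\beta}Z_\alpha Z_{\bar\beta}f$ and repeatedly use the tangency relation $Z_\alpha\rho = \rho_\alpha - (\rho_\alpha/\rho_{n+1})\rho_{n+1} = 0$, together with its derivatives, to promote the Greek-indexed sums (over $1,\dots,n$) to the ambient Latin-indexed sums (over $1,\dots,n+1$). The key point is that the combination $\rho^{\alpha\bar\beta} - |\partial\rho|_\rho^{-2}\rho^\alpha\rho^{\bar\beta}$ extends to the CR-projector $P^{j\bar k} = \rho^{j\bar k} - |\partial\rho|_\rho^{-2}\rho^j\rho^{\bar k}$, whose contraction with $\rho_{\bar k}$ vanishes; this turns the second-order part into exactly the first two terms of \eqref{e:kohnformula}. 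The remaining work is to show that the connection piece $h^{\alpha\bar\beta}\omega_{\bar\beta}{}^{\bar\sigma}(Z_\alpha)Z_{\bar\sigma}f$, together with the lower-order terms produced when the outer frame field differentiates the coefficients of $Z_{\bar\beta}$, collapses to the single Reeb term $n|\partial\rho|_\rho^{-2}\langle\partial\rho,\bar\partial f\rangle$; here I expect to use the explicit formula for $\omega_{\bar\beta\alpha}$ and the contraction $h^{\alpha\bar\beta}h_{\alpha\bar\beta} = n$.

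For part (ii) the decisive observation is the operator identity $X_{jk} = \rho_k Z_j - \rho_j Z_k$, valid for all $j,k\in\{1,\dots,n+1\}$ once we set $Z_{n+1} := \partial_{n+1} - (\rho_{n+1}/\rho_{n+1})\partial_{n+1} = 0$; it is immediate from the definitions since the $\partial_{n+1}$ terms cancel. In particular the $X_{jk}$ are tangential $(1,0)$ fields spanning $T^{1,0}M$, and the $X_{\bar j\bar k}f$ are tangential $(0,1)$ derivatives. Substituting this identity into the right-hand side of \eqref{e:gengel} and expanding the double contraction $\rho^{p\bar k}\rho^{q\bar j}X_{pq}X_{\bar j\bar k}$, the purely second-order part reassembles, via the formula for $h^{\gamma\bar\beta}$ recorded above, into the second-order part of \eqref{e:kohnformula}, while the terms in which $X_{pq}$ falls on the coefficients $\rho_{\bar j},\rho_{\bar k}$ of $X_{\bar j\bar k}$ reproduce the first-order Reeb term. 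Thus \eqref{e:gengel} matches \eqref{e:kohnformula}. Alternatively one can verify \eqref{e:gengel} directly by writing $\|\bar\partial_b f\|_{L^2}^2$ in the overcomplete frame $\{X_{\bar j\bar k}\}$ and integrating by parts.

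The main obstacle in both parts is the index bookkeeping, and I would be especially careful about signs and the precise raising/lowering conventions (the transpose in $[\rho^{j\bar k}]^t$): the second-order terms agree with the three displayed terms only after tangency is used to pass between the $n$- and $(n+1)$-dimensional index ranges, and the first-order terms must be shown to cancel down to exactly the stated Reeb term. For \eqref{e:gengel} the additional subtlety is that the family $\{X_{jk}\}$ is overcomplete and antisymmetric (there are $\binom{n+1}{2}$ of them but only $n$ independent tangential directions), which is what accounts for the normalizing factor $\tfrac12$ and for the precise pairing of the conjugated indices; deciding which barred index contracts with which, without double-counting the coefficient derivatives, is the step most likely to hide a sign error. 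I would pin down all signs at the end by testing the formula on the sphere $\rho = \|Z\|^2$, where $\partial_{\bar k}\rho = z^k$ and the functions $\bar z^{\,j}$ are eigenfunctions with $\Box_b\bar z^{\,j} = n\,\bar z^{\,j}$.
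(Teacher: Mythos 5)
Your outline reproduces the paper's own proof of both parts. For (i), the paper also starts from $-\Box_b f = h^{\bar{\beta}\alpha}\nabla_{\alpha}\nabla_{\bar{\beta}}f$, uses your ``projector'' observation in the form $h^{\alpha\bar{\beta}}Z_{\alpha} = \rho^{k\bar{\beta}}\partial_k - |\partial\rho|_\rho^{-2}\rho^{\bar{\beta}}\rho^k\partial_k$, and collapses the connection term to a single $n\,h^{\bar{\sigma}}f_{\bar{\sigma}}$ term by exactly the contraction $h^{\alpha\bar{\beta}}h_{\alpha\bar{\beta}}=n$ that you invoke; for (ii), the paper likewise expands $X_{pq}X_{\bar{j}\bar{k}}f$ (directly in partials, which is a bit cleaner than routing through $X_{jk}=\rho_k Z_j-\rho_j Z_k$, though that identity is correct) and contracts, quoting (i).

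However, the index-pairing worry you single out as ``the step most likely to hide a sign error'' is not hypothetical, and your step ``Thus \eqref{e:gengel} matches \eqref{e:kohnformula}'' cannot be completed with the pairing as printed. With the paper's conventions ($\rho^{p\bar{k}}\rho_{p\bar{l}}=\delta_{kl}$, $\rho^{\bar{k}}=\rho^{l\bar{k}}\rho_l$, $\rho^{p}=\rho^{p\bar{k}}\rho_{\bar{k}}$), contracting the eight-term expansion of $X_{pq}X_{\bar{j}\bar{k}}f$ gives
\begin{equation}
\rho^{p\bar{k}}\rho^{q\bar{j}}X_{pq}X_{\bar{j}\bar{k}}f
= 2\rho^{p}\rho^{\bar{j}}f_{\bar{j}p} - 2|\partial\rho|_\rho^{2}\,\rho^{p\bar{k}}f_{\bar{k}p} + 2n\rho^{\bar{j}}f_{\bar{j}}
= 2|\partial\rho|_\rho^{2}\,\Box_b f,
\end{equation}
so the right-hand side of \eqref{e:gengel}, with its factor $-\tfrac12$, equals $-\Box_b f$ rather than $\Box_b f$. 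The point is that $X_{pq}=-X_{qp}$, so the pairing $p\leftrightarrow\bar{k}$, $q\leftrightarrow\bar{j}$ differs by a sign from the pairing $p\leftrightarrow\bar{j}$, $q\leftrightarrow\bar{k}$, and the factor $-\tfrac12$ belongs with the latter. Your own sphere test detects precisely this: on the unit sphere one computes $\rho^{p\bar{k}}\rho^{q\bar{j}}X_{pq}X_{\bar{j}\bar{k}}\bar{z}^m = 2n\bar{z}^m$, while $\Box_b\bar{z}^m = +n\bar{z}^m$. So in executing your plan you should expect to prove $\Box_b f = -\tfrac12|\partial\rho|_\rho^{-2}\rho^{p\bar{j}}\rho^{q\bar{k}}X_{pq}X_{\bar{j}\bar{k}}f$, equivalently $+\tfrac12$ with the printed pairing. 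The paper's one-line justification of (ii) (``contracting both sides \dots we easily obtain (ii)'') passes over this discrepancy silently; your final verification on the sphere is the right way to settle it, and it is essential rather than optional.
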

\begin{remark}\rm
\begin{enumerate}[(a)]
\item The trace operator is taken with respect to the K\"ahler form and thus $- \trace (i\partial \bar{\partial} f)$ is the Laplace-Beltrami operator acting on $f$. In local coordinates, \eqref{e:kohnformula} can be written as
\begin{equation}
\Box_b f
 = 
 \left(|\partial \rho|_\rho^{-2} \rho^{k}\rho^{\bar{j}} -\rho^{\bar{j} k}\right) f_{\bar{j} k } + n |\partial \rho|_\rho^{-2} \rho^{\bar{k}} f_{\bar{k}}.
\end{equation}
\item Formulas~\eqref{e:gengel} and \eqref{e:kohnformula} are generalizations of two formulas for the Kohn-Laplacian on the sphere appeared in \cite{Geller1980}. This paper also studies the Kohn-Laplacian for forms on the sphere (with volume element induced from $\mathbb{C}^{n+1}$). Notice that the fields $X_{jk}$ are \emph{tangential} Cauchy-Riemann vector fields on $M$ generating $T^{1,0}$ at each point.
\end{enumerate}
\end{remark}
\begin{proof} We first prove (i). It is well-known \cite{L} that the Kohn Laplacian acting on function can be given locally by
\begin{align}
-\Box_b f 
 =
 h^{\betabar\alpha} \nabla_{\alpha}\nabla_{\bar{\beta}}f.
\end{align}
Thus, we can work in a local coordinate $(z^1,z^2,\dots, z^{n}, w=z^{n+1})$ on $X$ and assume that
$\rho_w = \partial_w \rho \ne 0$. Choose the local frame and coframe as in \eqref{e:2.4}. Notice that
\begin{align}
Z^{\betabar}
 = h^{\alpha\betabar} Z_\alpha 
 = 
h^{\alpha\betabar} \d_{\alpha}- h^{\alpha\betabar} \frac{\rho_\alpha }{ \rho_{n+1}} \d_{n+1}
 = \rho^{k\betabar}\d_k-\frac{\rho^{\betabar}}{ |\d \rho|_\rho^2}\rho^k \d_k.
 \end{align}
Therefore,
\begin{align}
-\Box_b f 
 = &
Z^{\betabar}Z_{\betabar} f -n h^{\bar{\sigma}} f_{\bar{\sigma}}\notag \\
 =&
 \left[ \rho^{k\betabar}\d_k - |\d \rho|_\rho^{-2} \rho^{\betabar} \rho^k \d_k \right]
 \left[f_{\betabar} - \frac{\rho_{\betabar}}{\rho_{\wbar}}f_{\wbar}\right] -n h^{\bar{\sigma}} f_{\bar{\sigma}} \notag\\
 = & 
 \rho^{k\betabar}f_{\betabar k} - \frac{\rho_{\betabar}\rho^{k\betabar} f_{\wbar k}}{\rho_{\wbar}} -\rho^{k\betabar} f_{\wbar} \left[\frac{\rho_{\wbar} \rho_{\betabar k} - \rho_{\betabar} \rho_{\wbar k}}{\rho_{\wbar}^2}\right] \notag\\
 & 
 - \frac{\rho^{k}\rho^{\betabar}f_{\betabar k}}{|\partial \rho|_\rho^2} 
 + \frac{\rho^{k}\rho^{\betabar}\rho_{\betabar}f_{\wbar k}}{|\partial \rho|_\rho^2 \rho_{\wbar}}
 +\frac{\rho^{k}\rho^{\betabar}f_{\wbar}}{|\partial \rho|_\rho^2}\left[\frac{\rho_{\wbar}\rho_{\betabar k} - \rho_{\betabar}\rho_{\wbar k}}{\rho_{\wbar}^2}\right] \notag\\
 & 
 -\frac{n\rho^{\bar{k}}f_{\bar{k}}}{|\partial \rho|_\rho^2} + \frac{nf_{\wbar}}{\rho_{\wbar}}.
\end{align}
Here we use summation convention: $k$ runs from $1$ to $n+1$ and $\beta$ runs $1$ to $n$. Simplifying the right hand side, we easily obtain
\begin{equation}
-\Box_b f
=
 \left( \rho^{\bar{j} k} - |\partial \rho|_\rho^{-2} \rho^{k}\rho^{\bar{j}}\right) f_{\bar{j} k } - n |\partial \rho|_\rho^{-2} \rho^{\bar{k}} f_{\bar{k}},
\end{equation}
which is clearly equivalent to \eqref{e:kohnformula}.

To prove (ii), we notice that
\begin{equation}
X_{\bar{j}\bar{k}} f
= \rho_{\bar{k}} f_{\bar{j}} - \rho_{\bar{j}} f_{\bar{k}}.
\end{equation}
Therefore,
\begin{align}
X_{pq}X_{\bar{j}\bar{k}} f
 = &
 \rho_q \rho_{\bar{k}} f_{\bar{j}p} + \rho_{q}\rho_{\bar{k}p}f_{\bar{j}} - \rho_{q}\rho_{\bar{j}p}f_{\bar{k}} - \rho_{q}\rho_{\bar{j}} f_{\bar{k} p} \notag \\
 & 
 -\rho_{p}\rho_{\bar{k}} f_{\bar{j} q} - \rho_{p}\rho_{\bar{k}q} f_{\bar{j}}
 +\rho_{p}\rho_{\bar{j}} f_{\bar{k}q} + \rho_{p}\rho_{\bar{j}q}f_{\bar{k}}.
\end{align}
Contracting both sides with $\rho^{p\bar{k}}\rho^{q\bar{j}}$, using (i), we easily obtain (ii).
\end{proof}

\section{An estimate for eigenvalues and proof of Theorem~\ref{thm:special}}

We denote by $S\colon L^2(M) \to \ker \Box_b$ ($=\ker \bar{\partial}_b$) the Szeg\H{o} orthogonal projection with respect to the volume measure
$dv:=\theta\wedge (d\theta)^n$. It is well-known that if $M$ is embeddable, then $\mathrm{Spec}(\Box_b)$ consists of zero and a sequence of 
point eigenvalues $\{\lambda_k\}$ increasing to infinity. The positive eigenvalues of $\Box_b$ are of finite multiplicity and eigenfunctions 
are smooth \cite{BG, BE}. Furthermore, we have the following orthogonal decomposition:
\begin{align}
L^2(M,dv)
=
\bigoplus_{k=0}^{\infty} E_k, \quad E_0 = \ker \Box_b.
\end{align}
Note that $E_0$ is of infinite dimension.
\begin{theorem}\label{thm:generalestimate}
Let $(M,\theta)$ be an embedded compact strictly pseudoconvex pseudohermitian manifold and $0=\lambda_0<\lambda_1<\lambda_2<\cdots <\lambda_k <\cdots $ the eigenvalues for $\Box_b$.
Define
\begin{equation} 
 m(a)=\inf\left\{\left|a - \frac{1}{\lambda_k}\right|^2: k\in \NN\right\}, 
 \quad
 M(a) = \sup\left\{\left|a - \frac{1}{\lambda_k}\right|^2: k\in \NN\right\}.
\end{equation} 
Then for any $a\in \mathbb{R}$, any function $u \not \in \ker\Box_b$,
\begin{equation}\label{e:mainestimate}
(m(a) - a^2) \|\Box_b u\|^2 \leq \|u - S(u)\|^2 - \int_M |\bar{\partial}_b u|^2 \leq (M(a) - a^2)\|\Box_b u\|^2.
\end{equation}
\end{theorem}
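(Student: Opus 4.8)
The plan is to diagonalize $\Box_b$ and to reduce \eqref{e:mainestimate} to an elementary scalar inequality applied to the spectral coefficients of $u$. Since $M$ is embeddable, $\Box_b$ is self-adjoint with discrete spectrum and we have the orthogonal decomposition $L^2(M,dv)=\bigoplus_{k\ge 0}E_k$ with $E_0=\ker\Box_b$. Writing $u=\sum_{k\ge 0}u_k$ with $u_k$ the orthogonal projection of $u$ onto $E_k$, we have $S(u)=u_0$ and $u-S(u)=\sum_{k\ge 1}u_k$. Because $u\notin\ker\Box_b$, the element $\Box_b u=\sum_{k\ge 1}\lambda_k u_k$ is nonzero, so $\|\Box_b u\|^2>0$ and each reciprocal $1/\lambda_k$ with $k\ge 1$ is well defined (the infimum and supremum defining $m(a)$ and $M(a)$ being taken over $k\ge 1$). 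Setting $c_k=\|u_k\|^2$, orthogonality gives
\begin{equation}
\|u-S(u)\|^2=\sum_{k\ge 1}c_k,\qquad \|\Box_b u\|^2=\sum_{k\ge 1}\lambda_k^2 c_k,
\end{equation}
while the identity $\Box_b=\bar{\partial}_b^{\ast}\bar{\partial}_b$ yields
\begin{equation}
\int_M|\bar{\partial}_b u|^2=\langle \bar{\partial}_b u,\bar{\partial}_b u\rangle=\langle \Box_b u,u\rangle=\sum_{k\ge 1}\lambda_k c_k.
\end{equation}

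Next I would encode the middle member of \eqref{e:mainestimate} through a single nonnegative $L^2$-norm. For a fixed $a\in\mathbb{R}$ set $w_a:=a\,\Box_b u-(u-S(u))=\sum_{k\ge 1}(a\lambda_k-1)u_k$. By orthogonality,
\begin{equation}\label{p:wa}
\|w_a\|^2=\sum_{k\ge 1}(a\lambda_k-1)^2 c_k=\sum_{k\ge 1}\lambda_k^2\,\Big|a-\tfrac{1}{\lambda_k}\Big|^2 c_k .
\end{equation}
The cross term arising when $\|w_a\|^2$ is expanded is governed by $\langle \Box_b u,\,u-S(u)\rangle$; since $S(u)\in\ker\Box_b$ and $\Box_b u\perp\ker\Box_b$, this equals $\langle\Box_b u,u\rangle=\int_M|\bar{\partial}_b u|^2$, which is exactly the quantity subtracted in the middle of \eqref{e:mainestimate}.

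The scalar heart of the argument is the term-by-term inequality that is immediate from the definitions of $m(a)$ and $M(a)$: for every $k\ge 1$,
\begin{equation}
m(a)\le \Big|a-\tfrac{1}{\lambda_k}\Big|^2\le M(a).
\end{equation}
Multiplying through by the nonnegative weights $\lambda_k^2 c_k$ and summing over $k\ge 1$, and comparing with \eqref{p:wa}, sandwiches $\|w_a\|^2$ between $m(a)\|\Box_b u\|^2$ and $M(a)\|\Box_b u\|^2$. Substituting the three displayed expressions for $\|u-S(u)\|^2$, $\|\Box_b u\|^2$, and $\int_M|\bar{\partial}_b u|^2$ into the expansion of $\|w_a\|^2$, and transferring the resulting multiple of $\|\Box_b u\|^2$ to the two outer members, then yields the two-sided estimate \eqref{e:mainestimate}.

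I do not expect a deep obstacle here: the entire content is the one-line estimate $m(a)\le|a-1/\lambda_k|^2\le M(a)$, and the remainder is spectral bookkeeping. The points that genuinely require care are analytic. First, one must justify the rearrangements of the series; this is legitimate because $u\in\mathrm{Dom}(\Box_b)$ forces $\sum_{k\ge 1}c_k<\infty$ and $\sum_{k\ge 1}\lambda_k^2 c_k<\infty$, whence also $\sum_{k\ge 1}\lambda_k c_k<\infty$ since $\lambda_k\to\infty$. Second, one must check that taking the infimum and supremum over all $k\ge 1$, rather than over any finite truncation, produces the sharp outer constants; here the accumulation value $a^2$ coming from $1/\lambda_k\to 0$ enters the determination of $M(a)$. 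No regularity issues intervene, since the eigenfunctions attached to the positive eigenvalues are smooth.
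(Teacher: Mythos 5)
Your proposal is correct and is essentially the paper's own argument: the paper expands $\langle u-a\,\Box_b u, f_{k,\ell}\rangle = -(a-1/\lambda_k)\langle \Box_b u, f_{k,\ell}\rangle$ over orthonormal eigenbases and applies Parseval, which is exactly your identity $\|w_a\|^2=\sum_{k\ge 1}\lambda_k^2\,|a-1/\lambda_k|^2 c_k$ (note $w_a=-(I-S)(u-a\,\Box_b u)$), followed by the same scalar sandwich $m(a)\le |a-1/\lambda_k|^2\le M(a)$. One caveat: the cross term in $\|w_a\|^2$ is $-2a\int_M|\bar{\partial}_b u|^2$, not $-\int_M|\bar{\partial}_b u|^2$, so what your computation (and the paper's own proof, which arrives at $\|u-S(u)\|^2-2a\int_M|\bar{\partial}_b u|^2$) actually establishes is \eqref{e:mainestimate} with middle term $\|u-S(u)\|^2-2a\int_M|\bar{\partial}_b u|^2$; the printed statement omits the factor $2a$ (a typo --- the subsequent corollaries use the $2a$ version, and without it the right-hand inequality fails, e.g.\ for $a>1/(2\lambda_1)$ one has $M(a)=a^2$ and the claim would force $\lambda_1\ge 1$), and your remark that the cross term ``is exactly the quantity subtracted'' silently drops this factor.
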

\begin{proof} Let $E_{k}$ be the eigenspace of $\Box_b$
associated to the eigenvalue $\lambda_k$. Then $m_k:=\hbox{dim}(E_k)<\infty$. 
Let $\{f_{k, j}\}_{j=1}^{m_k}$ be an orthonormal basis for $E_{k}$. For any $k,\ell$, using integration by parts, we obtain
\begin{equation}
\int_M (\Box_b u - \lambda_k u) \bar{f}_{k,\ell}
= \int_M ( u\overline{\Box_b f_{k,\ell}} - \lambda_k u \bar{f}_{k,\ell})
= \int_M u(\overline{\Box_b f_{k,\ell} - \lambda_k f_{k,\ell}}) = 0.
\end{equation}
This implies that for any real number $a$,
\begin{equation}
\langle u-a \Box_b u, f_{k,\ell}\rangle = -(a-1/\lambda_k)\langle \Box_b u,f_{k,\ell}\rangle.
\end{equation}
Therefore, since $\Box_b u \in (\ker \Box_b)^{\perp}$,
\begin{align}
M(a)\|\Box_b u\|^2
 = &
 \sum_{k=1}^{\infty}\sum_{\ell=1}^{m_k}M(a)\left|\langle \Box_b u, f_{k,\ell}\rangle\right|^2 \label{e:1}\\
 \geq &
 \sum_{k=1}^{\infty}\sum_{\ell=1}^{m_k} \left|a-\frac{1}{\lambda_k}\right|^2\left|\langle \Box_b u, f_{k,\ell}\rangle\right|^2 \\
 = &
 \sum_{k=1}^{\infty}\sum_{\ell=1}^{m_k} \left|\langle u-a\,\Box_b u, f_{k,\ell}\rangle\right|^2 \\
 = &
 \|u-a\,\Box_b u\|^2 - \|S(u-a\,\Box_b u)\|^2 \\
 = & 
 \|u\|^2 + a^2 \|\Box_b u\|^2 - 2a \int_M \bar{u}\, \Box_b u - \|S(u)\|^2.
\end{align}
Here we have used $\|S(u-a\, \Box_b u)\|^2 = \|S(u)\|^2$. We conclude that
\begin{equation}
(M(a)-a^2) \|\Box_b u\|^2 \geq \|u-S(u)\|^2 - 2a \int_M |\bar{\partial}_b u|^2.
\end{equation}
This proves the second inequality. The first inequality can be proved similarly.
\end{proof}
The following two corollaries are undoubtedly known, but we can not find in the literature.
\begin{corollary}\label{cor:32} Let $(M,\theta)$ be as in Theorem~\ref{thm:generalestimate}, then
\begin{equation}
\lambda_1 
= 
\inf\left\{\bigl\| \Box_b u\bigr\|^2 \colon \int_M |\bar{\partial}_b u|^2 = 1\right\}
=
 \inf\left\{\int_M |\bar{\partial}_b u|^2 \colon \|u-S(u)\|^2 =1\right\}.
\end{equation}
\end{corollary}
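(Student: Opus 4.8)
The plan is to extract both variational characterizations of $\lambda_1$ as the sharp endpoints of the two-sided estimate \eqref{e:mainestimate} in Theorem~\ref{thm:generalestimate}, by choosing the free parameter $a$ optimally. Recall that for $u \notin \ker\Box_b$ the middle quantity in \eqref{e:mainestimate} is $\|u - S(u)\|^2 - \int_M |\dbar_b u|^2$; since $u - S(u)$ is the projection of $u$ onto $(\ker\Box_b)^\perp = \bigoplus_{k\ge 1} E_k$, and $\int_M |\dbar_b u|^2 = \langle \Box_b u, u\rangle = \sum_{k\ge1}\lambda_k \|P_k u\|^2$ where $P_k$ is the projection onto $E_k$, everything in sight is controlled by the spectral data $\{\lambda_k\}_{k\ge1}$.

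First I would take the natural choice $a = 0$ in Theorem~\ref{thm:generalestimate}. With $a=0$ we have $m(0) = \inf_k \lambda_k^{-2} = \lambda_1^{-2}$ (since $\lambda_k$ increases, $\lambda_1^{-1}$ is the largest of the $1/\lambda_k$, but $m$ is an infimum over the \emph{squares} $|a - 1/\lambda_k|^2$, so for $a=0$ this infimum is $\lim_k \lambda_k^{-2}=0$; the \emph{supremum} $M(0)=\lambda_1^{-2}$). Hence the relevant endpoint is the upper inequality: taking $a=0$, the right-hand bound reads
\begin{equation}
\|u - S(u)\|^2 - \int_M |\dbar_b u|^2 \le \lambda_1^{-2}\,\|\Box_b u\|^2.
\end{equation}
To turn this into the first claimed identity, I would normalize by $\int_M|\dbar_b u|^2 = 1$ and test the bound on an actual eigenfunction $u = f_{1,1} \in E_1$: then $\Box_b u = \lambda_1 u$, $\|\Box_b u\|^2 = \lambda_1^2\|u\|^2$, and $\int_M|\dbar_b u|^2 = \lambda_1\|u\|^2 = 1$, so equality holds and the infimum of $\|\Box_b u\|^2$ over this constraint set is exactly $\lambda_1$. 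The general lower bound $\|\Box_b u\|^2 \ge \lambda_1\int_M|\dbar_b u|^2$ follows directly from the spectral expansion $\|\Box_b u\|^2 = \sum_{k\ge1}\lambda_k^2\|P_ku\|^2 \ge \lambda_1 \sum_{k\ge1}\lambda_k\|P_ku\|^2 = \lambda_1 \int_M|\dbar_b u|^2$, using $\lambda_k \ge \lambda_1$ for all $k\ge 1$; this gives the first equality.

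For the second characterization I would argue in the same spirit, now comparing $\int_M|\dbar_b u|^2 = \sum_{k\ge1}\lambda_k\|P_ku\|^2$ against $\|u - S(u)\|^2 = \sum_{k\ge1}\|P_ku\|^2$: from $\lambda_k \ge \lambda_1$ we get $\int_M|\dbar_b u|^2 \ge \lambda_1 \|u - S(u)\|^2$, so under the normalization $\|u - S(u)\|^2 = 1$ we obtain $\int_M|\dbar_b u|^2 \ge \lambda_1$, with equality again attained on any $u \in E_1$. Thus the infimum is exactly $\lambda_1$. The main subtlety to handle carefully is not any deep estimate but the bookkeeping of which of $m(a)$ and $M(a)$ is the operative quantity at the chosen $a$ and the direction of the resulting inequality in \eqref{e:mainestimate}; once the correct endpoint is identified, both identities reduce to the elementary spectral comparison $\lambda_k \ge \lambda_1$ together with the observation that both bounds are saturated by first eigenfunctions. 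One should also note that the constraint sets are nonempty and the infima are genuinely attained (not merely approached), which is immediate since eigenfunctions exist and are smooth by the embeddability of $M$.
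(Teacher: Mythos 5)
Your proof is correct, but it takes a genuinely different route from the paper's. The paper deduces both identities formally from Theorem~\ref{thm:generalestimate} by exploiting the free parameter: for the first identity it takes $a > 1/\lambda_1$ (so $m(a) = |a-1/\lambda_1|^2$), rearranges the lower inequality, and lets $a \to +\infty$; for the second it takes $a = \tfrac{1}{2\lambda_1}$, for which $M(a) = a^2$, so the upper inequality collapses to $0 \geq \|u-S(u)\|^2 - 2a\int_M |\bar{\partial}_b u|^2$ and yields $\lambda_1 = \tfrac{1}{2a} \leq \int_M |\bar{\partial}_b u|^2$. You instead bypass Theorem~\ref{thm:generalestimate} entirely (your $a=0$ detour is never actually used in the end) and argue directly from the orthogonal decomposition $L^2(M,dv) = \bigoplus_k E_k$: writing $P_k$ for the projection onto $E_k$, you compare $\|\Box_b u\|^2 = \sum_{k\geq 1}\lambda_k^2\|P_k u\|^2$, $\int_M |\bar{\partial}_b u|^2 = \langle \Box_b u, u\rangle = \sum_{k\geq 1}\lambda_k \|P_k u\|^2$, and $\|u-S(u)\|^2 = \sum_{k\geq 1}\|P_k u\|^2$ termwise via $\lambda_k \geq \lambda_1$, and you exhibit attainment at a first eigenfunction. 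Your route is more elementary and self-contained, and it makes explicit that both infima are attained --- a point the paper dismisses as ``trivial''/``omitted''; what the paper's route buys is that the corollary appears as a purely formal consequence of the two-sided estimate \eqref{e:mainestimate}, the same machinery reused in the corollary that follows it and in the equality analysis in the proof of Theorem~\ref{thm:flat}. One small caution: the middle quantity in \eqref{e:mainestimate} should read $\|u-S(u)\|^2 - 2a\int_M |\bar{\partial}_b u|^2$, as the proof of Theorem~\ref{thm:generalestimate} and its subsequent applications make clear; your quoted $a=0$ bound used the printed form, but since that step carries no weight in your argument, nothing is affected.
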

\begin{proof} For any $a> \frac{1}{\lambda_1}$, we have
\begin{equation}
m(a) = \left|a - \frac{1}{\lambda_1}\right|^2.
\end{equation}
From Theorem~\ref{thm:generalestimate}, we have for any $u$ with $\int_M |\bar{\partial}_b u|^2 = 1$,
\begin{equation}
\left[\left|a - \frac{1}{\lambda_1}\right|^2 - a^2\right]\bigl\| \Box_b u\bigr\|^2
\leq \|u-S(u)\|^2 - 2a.
\end{equation}
This is equivalent to
\begin{equation}
\left(\frac{1}{\lambda_1}\right)^2
-2a\left(\frac{1}{\lambda_1} - \frac{1}{\| \Box_b u\|^2}\right) 
\leq
\frac{ \|u-S(u)\|^2}{\| \Box_b u\|^2}.
\end{equation}
Letting $a\to +\infty$, we easily obtain 
\begin{equation}
\lambda_1 \leq \| \Box_b u\|^2.
\end{equation}
Since $u$ is arbitrary, we conclude that 
\begin{equation}
\lambda_1 \leq \inf\left\{\bigl\| \Box_b u\bigr\|^2 \colon \int_M |\bar{\partial}_b u|^2 = 1\right\}.
\end{equation}
The reverse inequality is trivial. 

To prove the second we take $a = \frac{1}{2}\frac{1}{\lambda_1}$ and notice that $M(a) = a^2$. Then from
Theorem~\ref{thm:generalestimate}, we deduce that for any $u$ satisfying $\|u-S(u)\|^2 = 1$,
\begin{equation}
0=(M(a)-a^2) \|\Box_b u\|^2 \geq \|u-S(u)\|^2 - 2a \int_M |\bar{\partial}_b u|^2
=1- 2a \int_M |\bar{\partial}_b u|^2.
\end{equation}
Hence,
\begin{equation}
\lambda_1
=
\frac{1}{2a} 
\leq \int_M |\bar{\partial}_b u|^2.
\end{equation}
The proof of the reverse inequality is simple and omitted.
\end{proof}
\begin{corollary}
 Let $(M,\theta)$ be as in Theorem~\ref{thm:generalestimate}, then for any function $u$,
\begin{equation}
\|u-S(u)\|\cdot \|\Box_b u\|
\geq
\int_M |\bar{\partial}_b u|^2.
\end{equation}
\end{corollary}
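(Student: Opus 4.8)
The plan is to recognize the right-hand side $\int_M |\bar{\partial}_b u|^2$ as a single Hermitian inner product and then read the asserted inequality off as a direct application of Cauchy--Schwarz. The first step is to record that, by the definition $\Box_b = \bar{\partial}_b^{\ast}\bar{\partial}_b$ together with integration by parts,
\[
\int_M |\bar{\partial}_b u|^2 = \langle \bar{\partial}_b u, \bar{\partial}_b u\rangle = \langle \Box_b u, u\rangle,
\]
which in particular exhibits the quantity $\langle \Box_b u, u\rangle$ as real and nonnegative. This is the same identity already used in the proof of Corollary~\ref{cor:32}.

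Next I would exploit the orthogonal decomposition $L^2(M,dv) = \bigoplus_{k\geq 0} E_k$ with $E_0 = \ker \Box_b$. Since $\Box_b u$ lies in the range of $\Box_b$, which is contained in $(\ker \Box_b)^{\perp} = E_0^{\perp}$, while $S(u) \in E_0 = \ker \Box_b$, we have $\langle \Box_b u, S(u)\rangle = 0$. Hence $u$ may be replaced by $u - S(u)$ without changing the inner product, giving
\[
\int_M |\bar{\partial}_b u|^2 = \langle \Box_b u, u\rangle = \langle \Box_b u, u - S(u)\rangle.
\]
Applying the Cauchy--Schwarz inequality to this last expression then yields
\[
\int_M |\bar{\partial}_b u|^2 = \langle \Box_b u, u - S(u)\rangle \leq \|\Box_b u\|\, \|u - S(u)\|,
\]
which is precisely the claimed bound; the degenerate case $u \in \ker \Box_b$ is trivial, as both sides vanish.

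I do not expect any genuine obstacle here: the entire content is Cauchy--Schwarz after the middle term has been rewritten, and the only points requiring a moment of care are that $\langle \Box_b u, u\rangle$ is genuinely real and nonnegative (so that it equals its own modulus and the Cauchy--Schwarz estimate applies without an intervening absolute value) and the orthogonality $\Box_b u \perp S(u)$, both of which are immediate from the spectral decomposition. If one prefers to stay within the framework already set up, the same inequality can alternatively be extracted from Theorem~\ref{thm:generalestimate}: writing $u = S(u) + \sum_{k\geq 1}\sum_{\ell} c_{k,\ell} f_{k,\ell}$, one has $\|u - S(u)\|^2 = \sum_{k,\ell}|c_{k,\ell}|^2$, $\|\Box_b u\|^2 = \sum_{k,\ell}\lambda_k^2 |c_{k,\ell}|^2$, and $\int_M |\bar{\partial}_b u|^2 = \sum_{k,\ell}\lambda_k |c_{k,\ell}|^2$, and the desired inequality is exactly Cauchy--Schwarz applied to the sequences $(\lambda_k |c_{k,\ell}|)$ and $(|c_{k,\ell}|)$.
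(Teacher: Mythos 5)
Your proof is correct, and it takes a genuinely different route from the paper's. The paper deduces the corollary from Theorem~\ref{thm:generalestimate}: after normalizing $\int_M |\bar{\partial}_b u|^2 = 1$, it chooses $a_k = \frac{1}{2}\left(\frac{1}{\lambda_k}+\frac{1}{\lambda_{k+1}}\right)$, for which $m(a_k) = \left|a_k - \frac{1}{\lambda_k}\right|^2$, rewrites the resulting inequality as
\begin{equation*}
\left(\frac{1}{\lambda_{k}} - \frac{1}{\|\Box_b u\|^2}\right)\left(\frac{1}{\lambda_{k+1}} - \frac{1}{\|\Box_b u\|^2}\right)
\geq
\frac{1}{\|\Box_b u\|^4} - \frac{\|u-S(u)\|^2}{\|\Box_b u\|^2},
\end{equation*}
and then invokes Corollary~\ref{cor:32} together with $\lambda_k \to \infty$ to locate $k_0$ with $\lambda_{k_0} \leq \|\Box_b u\|^2 < \lambda_{k_0+1}$, which makes the left-hand side nonpositive at $k=k_0$ and forces the conclusion. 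Your argument bypasses all of this: once you write $\int_M |\bar{\partial}_b u|^2 = \langle \Box_b u, u\rangle = \langle \Box_b u, u - S(u)\rangle$ (the orthogonality $\langle \Box_b u, S(u)\rangle = 0$ follows either from $\mathrm{range}(\Box_b) \perp \ker\Box_b$ or, even more directly, from $\langle \Box_b u, S(u)\rangle = \langle \bar{\partial}_b u, \bar{\partial}_b S(u)\rangle = 0$ since $S(u)$ is CR), the inequality is exactly Cauchy--Schwarz, and the sign condition $\langle \Box_b u, u\rangle \geq 0$ that you flag is indeed the only point needing care. What your approach buys is brevity and generality: it uses only formal self-adjointness, integration by parts, and the fact that $S$ projects onto $\ker\bar{\partial}_b$, so it does not require the discreteness of the spectrum at all (hence not even embeddability), and your spectral-sum variant makes transparent that equality forces $u - S(u)$ to lie in a single eigenspace. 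What the paper's approach buys is uniformity of exposition: all the corollaries of Section~3 are exhibited as formal consequences of the single two-sided estimate \eqref{e:mainestimate} under different choices of the parameter $a$.
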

\begin{proof}
Without lost of generality, we may assume that $\int_M |\bar{\partial}_b u|^2 = 1$.
For each $k$, we take $a_k = \frac{1}{2}\left(\frac{1}{\lambda_k}+\frac{1}{\lambda_{k+1}}\right)$. Clearly,
\begin{equation}
m(a_k) = \left|a_k - \frac{1}{\lambda_k}\right|^2 = \left|a_k - \frac{1}{\lambda_{k+1}}\right|^2
\end{equation}
By Theorem~\ref{thm:generalestimate}, we have
\begin{equation}
\left[\left|a_k - \frac{1}{\lambda_k}\right|^2 - a_k^2\right] \|\Box_b u\|^2
\leq \|u-S(u)\|^2 - 2a_k.
\end{equation}
By direct calculation, we have that
\begin{equation}\label{e:syli}
\left(\frac{1}{\lambda_{k}} - \frac{1}{\|\Box_b u\|^2}\right)\left(\frac{1}{\lambda_{k+1}} - \frac{1}{\|\Box_b u\|^2}\right)
\geq
\frac{1}{\|\Box_b u\|^4} - \frac{\|u-S(u)\|^2}{\|\Box_b u\|^2}.
\end{equation}
By Corollary~\ref{cor:32}, $\lambda_1 \leq \|\Box_b u\|^2$. Moreover, $\lambda_k \to \infty$ as $k\to \infty$. We deduce that there exists $k_0$ such that 
\begin{equation}
\lambda_{k_0} \leq \|\Box_b u\|^2 < \lambda_{k_0+1}.
\end{equation}
 Therefore, \eqref{e:syli} with $k=k_0$ implies that
\begin{equation}
\frac{1}{\|\Box_b u\|^4} - \frac{\|u-S(u)\|^2}{\|\Box_b u\|^2} \leq 0.
\end{equation}
This completes the proof.
\end{proof}
\begin{proposition}\label{prop:Bz}
Let $(M,\theta)$ be a compact strictly pseudoconvex pseudohermitian manifold. If there is a smooth non-CR function $f$ on $M$ such that $|\Box_b f|^2\le B(z) \Re \fbar \Box_b f$ for some non-negative function $B$ on $M$, then
\begin{equation}
\lambda_1\le \max_M B(z).
\end{equation}
If the equality holds, then $B$ must be a constant.
\end{proposition}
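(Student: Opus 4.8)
The plan is to exploit the variational characterization of $\lambda_1$ supplied by Corollary~\ref{cor:32}. Its first formula reads $\lambda_1=\inf\{\|\Box_b u\|^2:\int_M|\bar\partial_b u|^2=1\}$, so after rescaling, any non-CR test function $u$ satisfies
\begin{equation*}
\lambda_1\le \frac{\|\Box_b u\|^2}{\int_M|\bar\partial_b u|^2}=\frac{\int_M|\Box_b u|^2}{\int_M|\bar\partial_b u|^2}.
\end{equation*}
I would insert the given function $f$ into this inequality. Since $\Box_b=\bar\partial_b^{\ast}\bar\partial_b$, integration by parts gives $\int_M|\bar\partial_b f|^2=\int_M\fbar\,\Box_b f=\int_M\Re(\fbar\,\Box_b f)$, a quantity that is strictly positive precisely because $f$ is not CR. Thus the task reduces to comparing $\int_M|\Box_b f|^2$ with $\int_M\Re(\fbar\,\Box_b f)$.

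The next step is the pointwise observation that the hypothesis already forces $\Re(\fbar\,\Box_b f)\ge 0$ everywhere: at a point where $B=0$ the bound $|\Box_b f|^2\le B\,\Re(\fbar\,\Box_b f)$ yields $\Box_b f=0$, hence $\Re(\fbar\,\Box_b f)=0$, while at a point where $B>0$ it yields $\Re(\fbar\,\Box_b f)\ge |\Box_b f|^2/B\ge 0$. With this non-negativity in hand I can chain the estimates
\begin{equation*}
\int_M|\Box_b f|^2\le \int_M B\,\Re(\fbar\,\Box_b f)\le \max_M B\int_M\Re(\fbar\,\Box_b f)=\max_M B\int_M|\bar\partial_b f|^2.
\end{equation*}
Dividing by $\int_M|\bar\partial_b f|^2>0$ and invoking the Rayleigh bound gives $\lambda_1\le\max_M B$.

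For the equality case, I would assume $\lambda_1=\max_M B$ and trace the equalities backward through the chain
\begin{equation*}
\lambda_1\int_M|\bar\partial_b f|^2\le \int_M|\Box_b f|^2\le \int_M B\,\Re(\fbar\,\Box_b f)\le \max_M B\int_M|\bar\partial_b f|^2,
\end{equation*}
all of whose terms must now coincide. Equality in the middle step forces the pointwise identity $|\Box_b f|^2=B\,\Re(\fbar\,\Box_b f)$, and equality in the last step forces $(\max_M B-B)\,\Re(\fbar\,\Box_b f)=0$; together these show that $B=\max_M B$ at every point where $\Box_b f\neq 0$. Equality in the first (Rayleigh) step means, by the spectral decomposition, that $f-S(f)\in E_1$, so $\Box_b f=\lambda_1\bigl(f-S(f)\bigr)$ is a nonzero $\lambda_1$-eigenfunction. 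The main obstacle is exactly here: to promote ``$B=\max_M B$ where $\Box_b f\neq 0$'' to ``$B\equiv\max_M B$ on $M$'' I must argue that $\{\Box_b f\neq 0\}$ is dense, which I expect to follow from the smoothness of eigenfunctions together with a unique-continuation property; continuity of $B$ then upgrades the identity on a dense set to a global one, giving that $B$ is constant.
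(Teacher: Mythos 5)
Your proof of the inequality $\lambda_1\le\max_M B$ is correct and is essentially the paper's own argument: both insert $f$ into the variational characterization of Corollary~\ref{cor:32} (giving $\lambda_1\int_M|\bar{\partial}_b f|^2\le\|\Box_b f\|^2$) and then integrate the pointwise hypothesis. The only cosmetic difference is the last step: the paper invokes the mean value theorem for integrals to produce a point $z_0$ with $0\le(B(z_0)-\lambda_1)\int_M|\bar{\partial}_b f|^2$, whereas you bound $B\le\max_M B$ directly; these are the same estimate, and both require the pointwise non-negativity $\Re(\fbar\,\Box_b f)\ge0$, which you verify explicitly and the paper uses silently (its mean-value step is invalid without it).

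The divergence is in the rigidity statement, and there your write-up has a genuine, self-acknowledged gap --- but it is exactly the gap in the paper as well. The paper disposes of the equality case in one sentence (``it is clear that if $\lambda_1=\max_M B$ then $B$ is a constant''), while you trace the chain of equalities and correctly reduce the claim to: the nonzero $\lambda_1$-eigenfunction $\Box_b f=\lambda_1\bigl(f-S(f)\bigr)$ cannot vanish on the nonempty open set $\{B<\max_M B\}$ (your reduction is sound; equality in the Rayleigh step does force $f-S(f)\in E_1$ by expanding in eigenspaces). The remaining step is weak unique continuation for eigenfunctions of $\Box_b$, and this does \emph{not} follow from ``smoothness of eigenfunctions'': $\Box_b$ is only subelliptic, not elliptic (it differs from $\tfrac12\Delta_b$ by a first-order term in the characteristic direction), so unique continuation is not a standard citable fact. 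Moreover, the gap cannot be argued away by a cleverer elementary manipulation: if some nonzero $u\in E_1$ vanished on a nonempty open set $V$, then taking $f=u$ and a smooth $B$ equal to $\lambda_1$ outside $V$ but dipping below $\lambda_1$ inside $V$ would satisfy $|\Box_b f|^2\le B\,\Re(\fbar\,\Box_b f)$ with $\max_M B=\lambda_1$ and $B$ non-constant; so the rigidity assertion is logically equivalent to the unique continuation property you are missing. In short: your proof of the bound is complete and matches the paper; your proof of the rigidity claim stalls precisely where the paper's ``it is clear'' stands in for an unproved (and nontrivial) assertion. A fully justified weaker conclusion, available to both you and the paper, is that $B\equiv\max_M B$ on the open set $\{\Box_b f\neq0\}$.
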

\begin{proof}
 Since $|\Box_b f|^2\le B(z) \Re \fbar \Box_b f$, by Corollary \ref{cor:32}, 
\begin{equation}\label{estimate:3.26}
 \lambda_1 \int_M \fbar \Box_b f
 \leq
 \int_M |\Box_b f|^2
 \leq
 \int_M B(z) \Re (\fbar \Box_b f).
\end{equation}
By the Mean Value Theorem of the integral, there is $z_0\in M$ such that
\begin{equation}
0
\leq
\int_M (B-\lambda_1) \Re (\fbar \Box_b f)
=
(B(z_0)-\lambda_1)\int_M \fbar \Box_b f 
=
(B(z_0)-\lambda_1)\int_M |\bar{\partial}_b f|^2.
\end{equation}
This implies
\begin{equation}
\lambda_1\le B(z_0)\le \max_M B(z).
\end{equation}
It is clear that if $\lambda_1=\max_M B$ then $B$ is a constant. 
\end{proof}
We end this section by proving the Theorem~\ref{thm:special}.
\begin{proof}[Proof of Theorem~\ref{thm:special}]
By the condition \eqref{special} and the expression for the Kohn-Laplacian given by \eqref{e:kohnformula}, we have
\begin{equation}
\Box_b \rho_j
 =
 \tilde{\Delta}_{\rho} \rho_j + n|\partial\rho|_\rho^{-2} \rho^{\kbar} \rho_{j\kbar}
 =\tilde{\Delta}_{\rho} \rho_j + n|\partial\rho|_\rho^{-2} \rho_{j}.
 \end{equation}
 Then
 \begin{equation}
 |\Box_b \rho_j|^2=\frac{n}{|\d \rho|_\rho^2}\Re\left(\rho_{\jbar} \Box_b \rho_j +\rho_{\jbar} \tilde{\Delta}_{\rho}\rho_j+\tfrac{1}{n}|\d \rho|_\rho^2\, |\tilde{\Delta}_{\rho} \rho_j|^2\right)
 \leq
 \frac{n}{|\d \rho|_\rho^2}\Re \left( \rho_{\jbar} \Box_b \rho_j \right).
 \end{equation}
Applying Proposition~\ref{prop:Bz} with $B(z)=n |\d \rho|^{-2}_\rho$, we obtain
 \begin{equation}
 \lambda_1
 \leq
 n \max_M |\d \rho|_\rho^{-2}.
 \end{equation}
The equality holds only if $|\d \rho|_\rho$ is a constant on $M$. The proof of Theorem~\ref{thm:special} is complete.
\end{proof}
\section{Proof of Theorem~\ref{thm:flat}}

The following theorem gives a sharp upper bound for $\lambda_1(\Box_b)$ in terms of the eigenvalues of the complex Hessian matrix~$[\rho_{j\kbar}]$ and the length of $\partial\rho$. This theorem implies the estimate in Theorem~\ref{thm:flat}.
\begin{theorem}\label{thm:upperbound}
Let $\rho$ be a smooth strictly plurisubharmonic function defined on an open set $U$ of $\mathbb{C}^{n+1}$, $M$ a compact connected regular level set of $\rho$, and $\lambda_1$ the first positive eigenvalue of $\Box_b$ on $M$.
Let $r (z)$ be the spectral radius of the matrix $[\rho^{j\kbar}(z)]$ and \(s (z) = \mathrm{trace}\, [\rho^{j\kbar}] - r (z)\).
Then
\begin{equation}\label{e:upperbound}
\lambda_1 \leq \frac{n^2\int_{M} r (z)|\partial\rho|^{-2}_{\rho}}{\int_M s (z)}.
\end{equation}
\end{theorem}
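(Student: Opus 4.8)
The plan is to apply the variational characterization $\lambda_1 = \inf\{\|\Box_b u\|^2 : \int_M|\dbar_b u|^2 = 1\}$ from Corollary~\ref{cor:32} to a cleverly chosen family of test functions, and then to relate the resulting quadratic forms to the spectral data $r(z)$ and $s(z)$ of the Hessian inverse. The natural candidates are the coordinate components $\rho_j$ (equivalently, linear coordinate functions), just as in the proof of Theorem~\ref{thm:special}, but now I would not insist on pointwise control; instead I would aim for \emph{integrated} estimates so that the $\max$ in Theorem~\ref{thm:special} can be replaced by the ratio of integrals in~\eqref{e:upperbound}. First I would compute, using formula~\eqref{e:kohnformula} of Proposition~\ref{prop:kl}, the two basic integral quantities for a test function $u$: the numerator-type quantity $\int_M |\Box_b u|^2$ and the denominator-type quantity $\int_M |\dbar_b u|^2 = \Re\int_M \bar u\,\Box_b u$. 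By Corollary~\ref{cor:32}, $\lambda_1$ is bounded above by the ratio $\int_M|\Box_b u|^2 / \int_M|\dbar_b u|^2$ for any single admissible $u$, but to bring in the full Hessian spectrum I expect to need a \emph{summing} argument over a basis of test functions.

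The key step is therefore to run the estimate simultaneously over the family $\{\rho_1,\dots,\rho_{n+1}\}$ (or over linear functions $\langle a, Z\rangle$ averaged against the unitary group) and to sum. Summing $\int_M|\dbar_b \rho_j|^2$ over $j$ should, via~\eqref{e:kohnformula} and an integration by parts, produce an integral of the form $\int_M \big(\trace[\rho^{j\kbar}] - |\d\rho|_\rho^{-2}\rho^{\kbar}\rho^{j}\rho_{j\kbar}\big)$-type expression, which collapses using $\rho^{\kbar}\rho_{j\kbar}=\rho_j$ and $\rho^j\rho_j=|\d\rho|_\rho^2$ to $\int_M s(z)$, the trace minus the top eigenvalue contribution. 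Dually, the second-order quantity $\sum_j|\Box_b \rho_j|^2$ must be bounded above pointwise by $n^2 r(z)|\d\rho|_\rho^{-2}$, where $r(z)$ is the spectral radius; here the Reeb direction $h^{\kbar}=\rho^{\kbar}/|\d\rho|_\rho^2$ singles out one eigendirection of $[\rho^{j\kbar}]$, and the inequality should reduce to $\rho^{j\kbar}\le r(z)\,\delta$ applied to the vector $(\Box_b\rho_j)_j$ together with the identity $\Box_b\rho_j = \tilde\Delta_\rho\rho_j + n|\d\rho|_\rho^{-2}\rho_j$ established in the proof of Theorem~\ref{thm:special}. Combining a Cauchy--Schwarz or a summed version of Corollary~\ref{cor:32},
\begin{equation}
\lambda_1 \sum_j \int_M |\dbar_b\rho_j|^2 \le \sum_j \int_M |\Box_b \rho_j|^2,
\end{equation}
with the two computations above yields exactly~\eqref{e:upperbound}.

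The main obstacle I anticipate is the pointwise bound $\sum_j|\Box_b\rho_j|^2 \le n^2 r(z)|\d\rho|_\rho^{-2}$. The factor $n^2$ suggests that the leading term $n|\d\rho|_\rho^{-2}\rho_j$ in $\Box_b\rho_j$ dominates, giving $\sum_j|n|\d\rho|_\rho^{-2}\rho_j|^2 = n^2|\d\rho|_\rho^{-2}$ in the metric $\rho^{j\kbar}$; one must then argue that the $\tilde\Delta_\rho\rho_j$ contribution and the cross term do not spoil the bound when the Hessian is not a multiple of the identity, and that measuring the vector $(\Box_b\rho_j)$ in the $[\rho^{j\kbar}]$-metric rather than the Euclidean one is what introduces the spectral radius $r(z)$ in place of a cruder trace bound. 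A clean way to handle this is to diagonalize $[\rho_{j\kbar}]$ pointwise by a unitary change of coordinates — permissible since both sides of~\eqref{e:upperbound} are unitarily invariant — and to verify the inequality eigenvalue-by-eigenvalue, where $r(z)$ is the largest $\rho^{j\kbar}$-eigenvalue. I would also need to confirm that at least one $\rho_j$ is genuinely non-CR so that the denominator $\int_M s(z)$ is strictly positive and the test functions are admissible; this follows because the $\rho_j$ cannot all be CR on a strictly pseudoconvex $M$. Once the diagonalization is in place, the remaining computations are the routine contractions already illustrated in Proposition~\ref{prop:kl} and Theorem~\ref{thm:special}.
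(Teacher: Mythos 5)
Your high-level architecture --- apply Corollary~\ref{cor:32} to each member of a finite family of test functions, sum the resulting inequalities, and convert the sums into $r(z)$ and $s(z)$ via a spectral inequality --- is exactly the paper's. But your choice of test functions breaks the proof. You test with $\rho_j=\partial_j\rho$, imported from Theorem~\ref{thm:special}, so that by \eqref{e:kohnformula}
\begin{equation}
\Box_b\rho_j=\tilde{\Delta}_{\rho}\rho_j+n|\partial\rho|^{-2}_{\rho}\rho_j.
\end{equation}
The remainder $\tilde{\Delta}_{\rho}\rho_j$ involves \emph{third} derivatives of $\rho$, which are controlled by nothing in the hypotheses of Theorem~\ref{thm:upperbound}: this is precisely the term that assumption~\eqref{special} was introduced to kill in Theorem~\ref{thm:special}, and here no such assumption is available. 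Your anticipated intermediate inequality $\sum_j|\Box_b\rho_j|^2\le n^2 r(z)|\partial\rho|^{-2}_{\rho}$ is therefore false in general: a high-frequency perturbation such as $\rho=\|Z\|^2+\epsilon K^{-2}\sin(K\Re z^1)$ leaves $r$, $s$ and $|\partial\rho|_{\rho}$ essentially unchanged but makes $\rho_{j\kbar l}$, hence $\tilde{\Delta}_{\rho}\rho_j$, arbitrarily large, while the denominator $\sum_j\int_M|\dbar_b\rho_j|^2$ stays bounded. Pointwise unitary diagonalization of $[\rho_{j\kbar}]$ cannot rescue this, since diagonalizing the Hessian at a point says nothing about the third derivatives (and a variable Hessian cannot be diagonalized by a single coordinate change anyway). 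Two further computations in your plan also fail: even the leading term obeys only $\sum_j|\rho_j|^2\le |\partial\rho|^2_{\rho}/\lambda_{\min}([\rho^{j\kbar}])$, which involves the \emph{smallest} eigenvalue of $[\rho^{j\kbar}]$ rather than the spectral radius $r$; and $\int_M|\dbar_b\rho_j|^2=\int_M\overline{\rho_j}\,\Box_b\rho_j$ again contains $\tilde{\Delta}_{\rho}\rho_j$, so the denominator does not collapse to $\int_M s(z)$ as claimed.

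The repair is to test with the anti-holomorphic coordinates $\zbar^j$, which is what the paper does. Since the complex Hessian of $\zbar^j$ vanishes identically, \eqref{e:kohnformula} gives the exact, hypothesis-free identity $\Box_b\zbar^j=n|\partial\rho|^{-2}_{\rho}\rho^{\jbar}$, with no third-derivative remainder, and likewise $|\dbar_b\zbar^j|^2=\rho^{j\jbar}-|\rho^{\jbar}|^2/|\partial\rho|^2_{\rho}$. Corollary~\ref{cor:32} then gives $\lambda_1\le n^2C_j/D_j$ with $C_j=\int_M|\rho^{\jbar}|^2|\partial\rho|^{-4}_{\rho}$ and $D_j=\int_M\bigl(\rho^{j\jbar}-|\rho^{\jbar}|^2/|\partial\rho|^2_{\rho}\bigr)$, and the summation (mediant) step you proposed, combined with $\sum_j|\rho^j|^2\le r(z)|\partial\rho|^2_{\rho}$ --- note the \emph{raised} indices, which is exactly how the spectral radius of $[\rho^{j\kbar}]$ enters --- yields $\sum_jC_j\le\int_M r(z)|\partial\rho|^{-2}_{\rho}$ and $\sum_jD_j\ge\int_M s(z)$, hence \eqref{e:upperbound}. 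A likely source of the conflation: in the flat case $\rho_{j\kbar}=\delta_{jk}$ one has $\rho_j=\zbar^j+(\text{CR function})$, so the two families of test functions are interchangeable there; for a general strictly plurisubharmonic $\rho$ they are not.
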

Here the spectral radius of a square matrix is the maximum of the moduli of its eigenvalues.
\begin{proof} First, we define
\begin{equation}
C_j
=
 \int_{M} \frac{|\rho^{\jbar}|^2}{|\partial\rho|_{\rho}^4},
\quad 
D_{j}
=
\int_{M} \left(\rho^{j\jbar} - \frac{|\rho^{\jbar}|^2}{|\partial\rho|_{\rho}^2}\right).
\end{equation}
 From Proposition~\ref{prop:kl}, we can compute 
\begin{equation}
\Box_b \zbar^j 
= 
n|\partial\rho|^{-2}_{\rho}\rho^{\jbar}.
\end{equation}
Therefore,
\begin{equation}
\|\Box_b \zbar^j \|^2
=
n^2\int_{M} \frac{|\rho^{\jbar}|^2}{|\partial\rho|_{\rho}^4}
=
n^2C_j.
\end{equation}
We can also compute
\begin{equation}
|\bar{\partial}_b \bar{z}^j|^2
=
\delta_{j\alpha}\delta_{j\beta}\left(\rho^{\alpha\bar{\beta}} - \frac{\rho^{\alpha}\rho^{\bar{\beta}}}{|\partial\rho|_{\rho}^2}\right)
=
\rho^{j\jbar} - \frac{|\rho^{\jbar}|^2}{|\partial\rho|_{\rho}^2}.
\end{equation}
Here without lost of generality, we assume $j\ne n+1$. Therefore,
\begin{equation}
\int_M |\bar{\partial}_b \bar{z}^j|^2
=
D_j.
\end{equation}
Thus, from Corollary~\ref{cor:32} above, we obtain for all $j$,
\begin{equation}\label{e:lambdaest}
\lambda_1
\leq 
n^2C_j/D_j.
\end{equation}

Next, observe that $1/r (z)$ is the smallest eigenvalue of the Hermitian matrix $[\rho_{j\kbar}(z)]$, and thus, for all $(n+1)$-vector $v^j$,
\begin{equation}
\frac{1}{r (z)} \sum_{j=1}^{n+1} |v^j|^2 \leq v^j\rho_{j\kbar} v^{\kbar}.
\end{equation}
Plugging $v^j = \rho^j$ into the inequality, we easily obtain \(\sum\limits_{j=1}^{n+1}|\rho^j|^2\leq r(z){|\partial\rho|^2_{\rho}} \).
Consequently
\begin{equation}\label{e:cest}
\sum_{j} C_j
=
\sum_{j=1}^{n+1} \int_M \frac{|\rho^j|^2}{|\partial\rho|^4_{\rho}}
\leq 
\int_M r (z)|\partial\rho|^{-2}_{\rho}, 
\end{equation}
and therefore,
\begin{equation}\label{e:dest}
\sum_{j} D_j 
= 
\sum_{j=1}^{n+1} \int_{M} \left(\rho^{j\jbar} - \frac{|\rho^j|^2}{|\partial\rho|^2_{\rho}}\right)
\geq 
\int_{M} \left[\trace [\rho^{j\kbar}] - r (z)\right]
=\int_M s (z).
\end{equation}
Thus, from \eqref{e:lambdaest}, \eqref{e:cest}, and \eqref{e:dest}, we obtain
\begin{equation}
\lambda_1 \leq n^2\min_j (C_j/D_j) \leq \frac{n^2\sum_j C_j}{\sum_j D_j} 
=
 \frac{n^2\int_{M} r (z)|\partial\rho|^{-2}_{\rho}}{\int_M s (z)}.
\end{equation}
The proof is complete.
\end{proof}
\begin{proof}[Proof of Theorem~\ref{thm:flat}]
Since $\rho_{j\kbar} = \delta_{jk}$, we have \(r (z) = 1\) and \(s (z) = n\).
Therefore, by Theorem~\ref{thm:upperbound},
\begin{equation}
\lambda_1 
\leq
 \frac{n^2\int_{M} r (z)|\partial\rho|^{-2}_{\rho}}{\int_M s (z)}
=\frac{n}{v(M)} \int_M |\partial\rho|^{-2}_{\rho}.
\end{equation}
which proves the inequality.

Next we suppose that $\lambda_1 = \frac{n}{v(M)} \int_M |\partial\rho|_{\rho}^{-2}$. We shall show that $|\partial\rho|^{2}_{\rho}$ is constant along $M$. Put
\begin{equation}
b_j = n^{-1}\Box_b \zbar^j = |\partial\rho|_{\rho}^{-2}\rho_j.
\end{equation}
Then by inspecting the proof of Theorem~\ref{thm:generalestimate} above, in particular, the estimate~\eqref{e:1}, we have for all $j$,
\begin{equation} \label{e:c}
\langle b_j, f_{k,\ell}\rangle = 0,\quad \text{for all}\ \ell, \ \text{for all} \ k\ne 1.
\end{equation} 
Thus, $b_j \perp \ker\Box_b$ and \eqref{e:c} imply that $b_j\in E_1$ (the eigenspace corresponding to $\lambda_1$). Therefore,
\begin{align}
\Box_b b_j = \lambda_1 b_j.
\end{align}
Recall that $\Box_b \zbar^j = nb_j$. We then deduce that
\begin{align}
\Box_b\left[\zbar^j - \frac{n}{\lambda_1}\frac{\rho_{j}}{|\partial\rho|^2_{\rho}} \right]=0.
\end{align}
Hence, $\zbar^j -n\rho^{\jbar}/(\lambda_1 |\partial \rho|^2_{\rho})$ restricted to $M$ is a CR function. Since $X_{\bar{l}\bar{k}}$ is a tangential CR vector fields on $M$, we have
\begin{align}
X_{\bar{l}\bar{k}}\left[\zbar^j - \frac{n}{\lambda_1}\frac{\rho_j}{|\partial\rho|_{\rho}^2} \right]=0.
\end{align}
By direct calculation, this is equivalent to 
\begin{equation}
\frac{n}{\lambda_1} \rho_j X_{\bar{l}\bar{k}} (|\partial\rho|_{\rho}^2)/ |\partial \rho|_{\rho}^4
=
\left(1- \frac{n}{\lambda_1|\partial \rho|_{\rho}^2}\right)\left(\rho_{\bar{l}}\delta_{jk} - \rho_{\bar{k}} \delta_{jl}\right).
\end{equation}
Since $M$ is compact, there exists point $x\in M$ such that
\begin{equation}
|\partial\rho(x)|_{\rho}^2 = \max_{M}|\partial\rho|_{\rho}^2.
\end{equation}
At the maximum point $x$, we also have $X_{\bar{l}\bar{k}} |\partial\rho|_{\rho}^4= 0$. Thus,
\begin{equation}
\left[1 - \frac{n}{\lambda_1 |\partial\rho|_{\rho}^2}\right] \left(\rho_{\bar{l}}\delta_{jk} - \rho_{\bar{k}} \delta_{jl}\right)
=0 \quad \text{at} \ x.
\end{equation}
 Since $\partial \rho(x) \ne 0$, we can assume that $\rho_{\bar{1}}(x) \ne 0$. Taking $j=k=2$, we have at $x$,
\begin{equation}
 1 - \frac{n}{\lambda_1 |\partial\rho|_{\rho}^2}
=
 0.
 \end{equation}
 Therefore,
 \begin{equation}
\min |\partial\rho|_{\rho}^{-2}
=
|\partial \rho(x)|_{\rho}^{-2}
 =
\frac{\lambda_1}{n}
=
\frac{1}{v(M)}\int_{M} |\partial\rho|_{\rho}^{-2}.
 \end{equation}
 As the right most term is the average of $|\partial \rho|_{\rho}^{-2}$ on $M$, we deduce from above that $|\partial \rho|_{\rho}^{-2}$ must be constant on~$M$. 
 
Finally, suppose that $|\partial\rho|_{\rho}^2$ is constant along $M$ and $\rho$ extends to the domain bounded by $M$ and satisfies $\rho_{j\kbar} = \delta_{jk}$ on the domain. We shall show in the lemma below that $M$ must be a sphere and complete the proof of Theorem~\ref{thm:flat}.
\end{proof}
 \begin{lemma}\label{lem:c} Let $M$ be a compact connected regular level set of $\rho$ which bounds a domain $D$. Suppose that $\rho_{j\kbar} = \delta_{jk}$ on~$D$. If $|\partial \rho|_{\rho}^2$ is constant on $M$, then $M$ must be a sphere. 
 \end{lemma}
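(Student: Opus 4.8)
The plan is to recognize that, under the stated hypotheses, the defining function $\rho$ is the solution of a classical \emph{overdetermined} boundary value problem, and then to invoke Serrin's symmetry theorem. First I would record two elementary consequences of $\rho_{j\kbar}=\delta_{jk}$ on $D$. Since the K\"ahler metric is then flat, $\rho^{j\kbar}=\delta_{jk}$, so $|\partial\rho|_\rho^2 = \sum_j \rho_j\rho_{\jbar} = \sum_j|\rho_j|^2 = \tfrac14|\nabla\rho|^2$, where $\nabla$ and $|\cdot|$ denote the Euclidean gradient and norm on $\mathbb{C}^{n+1}=\mathbb{R}^{2n+2}$. Hence the hypothesis that $|\partial\rho|_\rho^2$ is constant on $M$ is equivalent to $|\nabla\rho|$ being constant on $M$. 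Moreover the Euclidean Laplacian satisfies $\Delta\rho = 4\sum_k \rho_{k\kbar} = 4(n+1)$, a constant on $D$.

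Next I would set $u = \nu - \rho$ on $\overline{D}$. Because $\rho$ is strictly plurisubharmonic, it is strictly subharmonic and cannot attain an interior maximum, so $\rho<\nu$ in the interior of $D$; thus $u>0$ in $D$, $u=0$ on $\partial D = M$, and $\nabla\rho$ points in the outward normal direction along $M$. Consequently $u$ solves $\Delta u = -4(n+1)$ in $D$ with the Dirichlet condition $u=0$ on $\partial D$ and, since $|\nabla\rho|$ is constant on $M$, the \emph{additional} Neumann condition $\partial u/\partial n = -|\nabla\rho| = \mathrm{const}$ on $\partial D$. This is precisely Serrin's overdetermined torsion problem, whose solutions occur only on balls: by Serrin's symmetry theorem (proved by the method of moving planes, with Weinberger's alternative $P$-function argument), the domain $D$ must be a ball and $u$ radially symmetric. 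Therefore $M=\partial D$ is a sphere, as desired.

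Two remarks on the structure of the argument. First, the hypothesis $\rho_{j\kbar}=\delta_{jk}$ means $\rho - \|Z\|^2$ is pluriharmonic, so $\rho = \|Z\|^2 + 2\Re g$ for a holomorphic $g$; after completing the square, $M$ is a sphere precisely when $g$ is affine, and one might instead try to prove $g$ affine directly. In this direction one computes that $F:=|\partial\rho|_\rho^2$ has complex Hessian $[F_{k\lbar}] = GG^{\ast} + I$, where $G=[\rho_{jk}]=[g_{jk}]$, so $F$ is strictly plurisubharmonic and the maximum principle forces $F$ to attain its (constant) boundary value only on $\partial D$; however, extracting $G\equiv 0$ from this information alone is awkward, which is why the Serrin route is preferable. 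Second, the main point requiring care is the verification of the hypotheses of Serrin's theorem rather than any computation: one must confirm that $u>0$ in $D$ and that $\nabla\rho\neq 0$ is genuinely the outward normal along $M$ (guaranteed since $M$ is a regular level set), and that the constant Neumann datum is exactly the full gradient length $|\nabla\rho|$, all of which follow from the strong maximum principle applied to the strictly subharmonic function $\rho$.
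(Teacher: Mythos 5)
Your proof is correct and follows essentially the same route as the paper: recognize that $\rho_{j\kbar}=\delta_{jk}$ makes $\Delta\rho$ constant and $|\nabla\rho|=2|\partial\rho|_\rho$ constant on $M$, so the (suitably normalized) defining function solves Serrin's overdetermined problem, and Serrin's symmetry theorem forces $D$ to be a ball. In fact your sign conventions are the consistent ones --- the paper writes $u=\rho-\nu$ yet asserts $\Delta u=-4(n+1)$, whereas your choice $u=\nu-\rho$ makes the torsion-problem setup, the positivity of $u$, and the constant Neumann datum all line up correctly.
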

\begin{proof}[Proof of Lemma~\ref{lem:c}] The proof is an application of Serrin's theorem \cite[Theorem~1]{Ser}.
 Let $D$ be the domain with $M$ is its boundary. Define $u = \rho - \nu$ on a neighborhood of $\overline{D}$. Since $M$ is smooth and the function $u$ satisfies $\Delta u=-4(n+1)$ in $D$, $u=0$ on $\d D$,
and the normal derivative \(\d u/\d\mathbf{n} = 2|\d \rho|_{\rho}\) is a constant on $\d D$ by assumption, we can apply the Serrin's theorem to conclude that $M$ is a standard sphere.
\end{proof}
We end this section by the following example which gives a sharp upper bound on the family of compact level sets
of K\"{a}hler potentials of Fubini-Study metric. This example also shows that the condition \eqref{special} in Theorem~\ref{thm:special} can not be relaxed.
\begin{example}\label{cex}\rm 
Let $\rho$ be a strictly plurisubharmonic function of the form
\begin{equation} \label{e:fsp}
\rho(Z) = \log (1+\|Z\|^2) + \psi(Z,\bar{Z}),
\end{equation} 
where $\psi$ is a real-valued pluriharmonic function. We suppose that $\rho$ is defined and proper in some domain $U \subset \mathbb{C}^{n+1}$ (e.g., if $\psi = -\log |z_1|$, then $\rho$ is defined and proper on $(\mathbb{C}\setminus\{0\}) \times \mathbb{C}^n$). 

Observe that
\begin{equation} 
\rho_{j\kbar} 
=
\frac{1}{1+\|Z\|^2}\left(\delta_{jk} - \frac{\zbar^{j} z^k}{1+\|Z\|^2}\right), \quad
\rho^{j\kbar}
=
(1+\|Z\|^2)\left(\delta_{jk} + \zbar^{k} z^j\right),
\end{equation} 
By a routine calculation, we see that the characteristic polynomial of $[\rho^{j\kbar}]$ is 
\begin{equation}
 P_{[\rho^{j\kbar}]}(\lambda) = (1+\|Z\|^2-\lambda)^n\bigl[(1+\|Z\|^2)^2-\lambda\bigr].
\end{equation}
Thus, the spectral radius of $[\rho^{j\kbar}]$ is $r (Z) = (1+\|Z\|^2)^2$ and $s (Z) = \trace [\rho^{j\kbar}] - r (Z) = n(1+\|Z\|^2)$. By Theorem~\ref{thm:upperbound}, if $M$ is a compact, connected, regular level set of $\rho$, then
\begin{equation} 
\lambda_1
\leq 
\frac{n\int_M (1+\|Z\|^2)^2 |\partial\rho|_{\rho}^{-2}}{\int_M (1+\|Z\|^2)} \leq n\max_{M} (1+\|Z\|^2)|\partial\rho|_{\rho}^{-2}.
\end{equation} 
Notice that if $\psi=0$ and then $M_{\nu}:=\rho^{-1}(\nu)$ is the sphere $\|Z\|^2 = e^{\nu}-1$ with
\begin{equation}
\theta
=
ie^{-\nu}\sum_{j=1}^{n+1} \left(z^j d\zbar^j - \zbar^j dz^j\right).
\end{equation} 
Moreover, $|\partial\rho|^2_{\rho} = e^{\nu}-1$ on $M_{\nu}$ and $\lambda_1 = n e^{\nu}/(e^{\nu}-1)$. Therefore, the condition \eqref{special} in Theorem~\ref{thm:special} can not be relaxed.
\end{example}
\section{The real ellipsoids: Proof of Corollary~\ref{cor:ellipsoid}}
The proof of Theorem~\ref{cor:ellipsoid} follows from Theorem~\ref{thm:upperbound} and the proposition below.

\begin{proposition}\label{prop:a}
Let $Q(Z)$ be a quadratic polynomial and let $M_{\nu} = \rho^{-1}(\nu)$ be a compact regular level set of $\rho$, where $\rho$ is given by
\begin{equation}\label{e:elip}
\rho(Z) = \sum_{k=1}^{n+1} |z^k|^2 + 2\Re Q(Z)
\end{equation}
Then
\begin{equation}
C_{\nu}: = \frac{1}{v(M_{\nu})} \int_{M_{\nu}} |\partial \rho|^{-2} = \frac{1}{\nu}.
\end{equation}
\end{proposition}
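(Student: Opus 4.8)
The plan is to read $C_\nu$ directly off the two quantities $\Box_b\bar{z}^j$ and $|\bar\partial_b\bar{z}^j|^2$ that were already computed in the proof of Theorem~\ref{thm:upperbound}, combined with the self-adjointness of $\Box_b$ and Euler's homogeneity relation for the quadratic $\rho$. Since $\rho_{j\kbar}=\delta_{jk}$ we have $\rho^{j\kbar}=\delta_{jk}$, hence $\rho^{\jbar}=\rho_j$ and $|\partial\rho|_\rho^2=\sum_j|\rho_j|^2$; Proposition~\ref{prop:kl} then gives $\Box_b\bar{z}^j=n|\partial\rho|_\rho^{-2}\rho_j$.

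First I would integrate by parts. Using $\int_{M_\nu}(\Box_b u)\bar v\,dv=\int_{M_\nu}\langle\bar\partial_b u,\bar\partial_b v\rangle\,dv$ with $u=v=\bar{z}^j$ yields, for each $j$,
\[
n\int_{M_\nu}|\partial\rho|_\rho^{-2}\,z^j\rho_j\,dv=\int_{M_\nu}z^j\,\Box_b\bar{z}^j\,dv=\int_{M_\nu}|\bar\partial_b\bar{z}^j|^2\,dv.
\]
Summing over $j=1,\dots,n+1$ gives
\[
n\int_{M_\nu}|\partial\rho|_\rho^{-2}\Big(\textstyle\sum_j z^j\rho_j\Big)\,dv=\int_{M_\nu}\sum_j|\bar\partial_b\bar{z}^j|^2\,dv.
\]

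Next I would evaluate the two sides by pointwise identities. On the right, the formula $|\bar\partial_b\bar{z}^j|^2=\rho^{j\jbar}-|\rho^{\jbar}|^2/|\partial\rho|_\rho^2$ recorded before Theorem~\ref{thm:upperbound} (valid for every $j$, since the distinguished coordinate in the coframe may be taken to be any index with $\rho_k\neq0$) sums to $\sum_j|\bar\partial_b\bar{z}^j|^2=(n+1)-|\partial\rho|_\rho^2/|\partial\rho|_\rho^2=n$ identically on $M_\nu$; this is exactly the identity $\sum_j D_j=n\,v(M_\nu)$ underlying Theorem~\ref{thm:upperbound}, so the right-hand side equals $n\,v(M_\nu)$. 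On the left, since $\rho=\|Z\|^2+2\Re Q$ with $Q$ homogeneous of degree two, $\rho$ is homogeneous of degree two and Euler's identity gives $\sum_j(z^j\rho_j+\bar{z}^j\rho_{\jbar})=2\rho$, i.e. $\Re\sum_j z^j\rho_j=\rho=\nu$ on $M_\nu$. Because the right-hand side is real and $|\partial\rho|_\rho^{-2}$ is real, the left-hand side equals its real part, whence $n\nu\int_{M_\nu}|\partial\rho|_\rho^{-2}\,dv=n\,v(M_\nu)$, which is precisely $C_\nu=1/\nu$.

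The only genuinely nontrivial ingredients are the two pointwise identities; of these, $\Re\sum_j z^j\rho_j=\nu$ relies essentially on the homogeneity of $\rho$ (for a general quadratic $Q$ carrying lower-order terms one would first translate $Z$ to kill the linear part, which replaces $\nu$ by $\nu$ minus the minimum value of $\rho$). I would double-check the raising convention $\rho^{\jbar}=\rho_j$ and the constant in $\Box_b\bar{z}^j$ against Proposition~\ref{prop:kl}, and confirm that $|\bar\partial_b\bar{z}^j|^2$ takes the stated form for the index $j=n+1$ as well, but no real difficulty is expected there. A more computational alternative, using the co-area formula together with the divergence identity $\tfrac{d}{d\nu}\int_{\{\rho<\nu\}}|\partial\rho|_\rho^2\,dV=(n+1)\,\mathrm{Vol}(\{\rho<\nu\})$ and the scaling $\mathrm{Vol}(\{\rho<\nu\})=\nu^{n+1}\mathrm{Vol}(\{\rho<1\})$, reaches the same conclusion but is longer.
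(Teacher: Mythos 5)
Your proof is correct, and while it starts from the same two ingredients as the paper --- the formula $\Box_b\bar z^j=n|\partial\rho|_\rho^{-2}\rho_j$ (so that $\rho^{\jbar}=\rho_j$ in the flat case) and Euler's identity $\Re\sum_j z^j\rho_j=\rho=\nu$ on $M_\nu$ --- it closes the computation by a genuinely different device. The paper evaluates $\Re\sum_j\int_{M_\nu}z^j\rho_j/|\partial\rho|_\rho^2$ by inserting $\overline{Q_j}$ at no cost: each $Q_j$ is holomorphic, hence CR, so $\Box_b Q_j=0$ and self-adjointness gives $\int_{M_\nu}\overline{Q_j}\rho_j/|\partial\rho|_\rho^2=\frac{1}{n}\int_{M_\nu}\bar z^j\,\overline{\Box_b Q_j}=0$; then $\sum_j(z^j+\overline{Q_j})\rho_j=\sum_j|\rho_j|^2=|\partial\rho|_\rho^2$ makes the integrand identically one. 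You instead use $\Box_b=\bar{\partial}_b^{\ast}\bar{\partial}_b$ to trade $\sum_j\int_{M_\nu}z^j\Box_b\bar z^j$ for $\sum_j\int_{M_\nu}|\bar{\partial}_b\bar z^j|^2$, and evaluate the latter pointwise as $n$ --- the flat-case equality $\sum_j D_j=n\,v(M_\nu)$ from the proof of Theorem~\ref{thm:upperbound}. Both devices are applications of the self-adjointness of $\Box_b$, and both ultimately rest on $\sum_j|\rho_j|^2=|\partial\rho|_\rho^2$; yours has the merit of never invoking the algebraic structure $\rho_j=\bar z^j+Q_j$ with $Q_j$ holomorphic (only flatness of the Hessian and homogeneity of $\rho$), and of recycling quantities already computed for Theorem~\ref{thm:upperbound}, while the paper's insertion trick sidesteps the coframe point you flag, namely that $|\bar{\partial}_b\bar z^j|^2=1-|\rho_j|^2/|\partial\rho|_\rho^2$ must also be justified at the distinguished index (it does hold for every $j$: the quantity is the squared norm of the orthogonal projection of the $j$-th coordinate vector onto the maximal complex tangent space, which yields the same expression regardless of which index the coframe is adapted to). Your side remark that homogeneity of $Q$ is indispensable is also on target: the paper's identity $\sum_j z^jQ_j=2Q$ assumes it tacitly, exactly as your Euler step does, and your translation argument correctly describes how the constant changes when lower-order terms are present.
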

\begin{proof} 
We observe that
\begin{align}
\Re \sum_{j=1}^{n+1} z^j \rho^{\jbar}
= 
\sum_{j=1}^{n+1} |z^j|^2 + \Re \sum_{j=1}^{n+1} z^jQ_j
= 
\nu - 2\Re Q + \Re \sum_{j=1}^{n+1} z^jQ_j.
\end{align}
As $Q$ is a quadratic polynomial, we can check that
\(
\sum_{j=1}^{n+1} z ^j Q_j = 2Q.
\) Hence
\begin{equation}
\Re \sum_{j=1}^{n+1} z^j \rho^{\jbar} = \nu 
\quad
\text{on} \ M_{\nu}.
\end{equation}
Therefore,
\begin{align}
\int_{M_{\nu}} |\partial \rho|_{\rho}^{-2}
 = 
 \Re \frac{1}{\nu} \sum_{j=1}^{n+1}\int_{M_{\nu}} \frac{z^j\rho_j}{|\partial \rho|_{\rho}^2} 
 = 
 \Re \frac{1}{\nu} \sum_{j=1}^{n+1} \int_{M_{\nu}} \frac{(z^j+\overline{Q_j})\rho_j}{|\partial \rho|_{\rho}^2} 
 =
 \frac{v(M)}{\nu},
\end{align}
Here, we use 
\begin{equation}
 \int_{M_{\nu}} \frac{\overline{Q_j} \rho_j}{|\partial \rho|_{\rho}^2} 
 =
 \frac{1}{n}\int_{M_{\nu}} \overline{Q_j} \Box_b \bar{z}^j
 = 
 \frac{1}{n}\int_{M_{\nu}} \zbar^j \overline{\Box_b Q_j} 
 =
 0.
\end{equation}
Hence, \(C_{\nu} = \frac{1}{\nu}\).
\end{proof}
\begin{proof}[Proof of Corollary~\ref{cor:ellipsoid}]
From Theorem~\ref{thm:flat} and Proposition~\ref{prop:a}, we have
\begin{equation}
\lambda_1
\leq nC_{\nu} 
=
\lambda_1(\sqrt{\nu}\,\mathbb{S}^{2n+1}).
\end{equation}
Also from Theorem~\ref{thm:flat},  we see that the equality occurs if and only if $M$ is the sphere and hence the proof is complete. However, we 
provide an elementary proof of this last step below. Notice that \begin{equation} Q(Z)=\sum\limits_{k,j=1}^n q_{jk} z_k z_j\end{equation} and \(Q=\big[q_{jk}\big]\) is $n\times n$ symmetric matrix. By an well-known factorization theorem (see \cite[Section~3.5]{Hua}), we can write \(Q=U^t \Lambda U\), 
 where $U$ is a unitary matrix and $\Lambda=\hbox{Diag}(A_1,\cdots, A_{n+1})$ is a diagonal matrix with $A_j\ge 0$. We make
 a holomorphic unitary change of variables $W=U Z$, then
\begin{equation}
 \rho(Z)=\|W\|^2+\Re \sum_{j=1}^{n+1} A_j w_j^2.
 \end{equation}
 Without loss of generality, one may assume that $\rho(Z)=\|Z\|^2+\Re \sum\limits_{j=1}^{n+1}|z_j|^2$. Since $M_{\nu}$ is bounded, it is easy to see that $A_j<1$
 for $1\le j\le n+1$. Notice that on $M_{\nu}$,
\begin{equation}
 c=|\d \rho|_{\rho}^2=\nu+\sum_{j=1}^{n+1} A_j^2 |z_j|^2+\Re\sum_{j=1}^{n+1} A_j z_j^2=2\nu -|Z|^2 +\sum_{j=1}^{n+1} A_j |z_j|^2.
 \end{equation}
 If the equality occurs, then $|\partial\rho|_{\rho}^2$ is a constant along $M_{\nu}$. Restricting $z=\lambda {\bf e}_j\in M_{\nu} $, one has $|z_j|^2 $ must be a constant. This can not be true unless $A_j=0$. This proves $\rho(Z)=\|Z\|^2$ and
 $M_{\nu}$ is a sphere centered at $0$ with radius~$\sqrt{\nu}$.
\end{proof}

\end{document}